\newcommand{\f}{\boldsymbol{f}}
\newcommand{\g}{\boldsymbol{g}}
\let\d\relax
\newcommand{\d}[1]{\, \textup{d}#1}
\newcommand{\Ctail}{C_{\textup{tail}}}
\newcommand{\zetaa}{\zeta^{\textup{active}}}
\tikzset{meshStyle/.style={very thin, color = black!75!white, fill=none, line join=round, line
cap=round}}
\pgfplotsset{%
	compat=newest,%
	every axis/.style={scale only axis},%
	grid style={densely dotted, semithick},%
}
\definecolor{TUBlue}{HTML}{006699}
\definecolor{TUGreen}{HTML}{007E71}
\definecolor{TUGrey}{HTML}{646363}
\definecolor{TUMagenta}{HTML}{BA4682}
\definecolor{TUYellow}{HTML}{E18922}
\title[MultiGOAFEM]{Multigoal-oriented adaptive finite element methods\\with convergence rates}
\author{Roland Becker~\orcidlink{0000-0003-2921-7855}}
\author{Maximilian Brunner~\orcidlink{0000-0003-0636-1491}}
\author{Paula Hilbert~\orcidlink{0009-0005-0105-1066}}
\author{\\ Michael Innerberger~\orcidlink{0000-0002-7001-8682}} 
\author{Dirk Praetorius~\orcidlink{0000-0002-1977-9830}}
\address{Université de Pau et des Pays de l’Adour, IPRA-LMAP, Avenue de l'Université BP 1155, 64013 PAU Cedex, France}
\email{roland.becker@univ-pau.fr}
\address{TU Wien, Institute of Analysis and Scientific Computing, Wiedner Hauptstr. 8--10/E101/4, 1040 Vienna, Austria}
\email{maximilian.brunner@asc.tuwien.ac.at}
\email{paula.hilbert@asc.tuwien.ac.at \quad {\normalfont{(corresponding author)}}}
\email{dirk.praetorius@asc.tuwien.ac.at}
\address{Janelia Research Campus, Howard Hughes Medical Institute, Ashburn, VA, USA}
\email{innerbergerm@hhmi.org}
\keywords{Adaptivity, goal-oriented algorithm, quantity of interest, a posteriori error estimation, convergence, optimal
 convergence rates, finite element method}
\subjclass[2010]{65N30, 65N50, 65N15, 65Y20, 41A25}
\thanks{This research was funded in whole or in part by the Austrian Science Fund (FWF) projects
\href{https://www.fwf.ac.at/en/research-radar/10.55776/F65}{10.55776/F65} (SFB
F65 ``Taming complexity in PDE systems''),
\href{https://www.fwf.ac.at/en/research-radar/10.55776/I6802}{10.55776/I6802}
(international project I6802 ``Functional error estimates for PDEs on unbounded
domains''),
\href{https://www.fwf.ac.at/en/research-radar/10.55776/PAT3699424}{10.55776/PAT3699424}
(standalone project PAT3699424 ``Optimal robust solvers for reliable and efficient AFEMs''),
and
\href{https://www.fwf.ac.at/en/research-radar/10.55776/PAT3446525}{10.55776/PAT3446525}
(standalone project PAT3446525 ``Adaptive Uzawa-type FEM for nonlinear PDEs'').
Additionally, Paula Hilbert is supported by the Vienna School of Mathematics and
Michael Innerberger is supported by HHMI Janelia.}
\begin{document}


\maketitle
\thispagestyle{fancy}

\begin{abstract}
We formulate and analyze a goal-oriented adaptive finite element method for a symmetric linear elliptic partial differential equation (PDE) that can simultaneously deal with multiple linear goal functionals.
In each step of the algorithm, only two linear finite element systems have to be solved.
Moreover, all finite element solutions are computed with respect to the same discrete space, while the underlying triangulations are adapted to resolve all inherent singularities simultaneously.
Unlike available results for such a setting in the literature, we give a thorough convergence analysis and verify that our algorithm guarantees, in an appropriate sense, even optimal convergence rates. 
Numerical experiments underline the derived theoretical results.
\end{abstract}


\section{Introduction}

\subsection{Overview and state of the art}
\textsl{A~posteriori} error estimation and adaptive mesh refinement are key tools for the efficient numerical solution of partial differential equations (PDEs). In particular, adaptive finite element methods (AFEMs) are able to resolve singularities and other localized features of the PDE solution with optimal convergence rates—both with respect to the number of degrees of freedom and the overall computational cost (or runtime); see, e.g.,~~\cite{s2007, ckns2008, cg2012, axioms, ghps2021,bfmps2025}. Typically, an AFEM algorithm consists of the following modules, which are applied iteratively:
\begin{center}
  \hspace{-0.8em}
         \begin{tikzpicture}
         	\tikzstyle{afemnode} = [draw, ultra thick, color=TUBlue, text=black, minimum width=5em, rounded corners]
         	\tikzstyle{afemarrow} = [ultra thick, color=TUBlue, -stealth, rounded corners]
         	\tikzstyle{dummynode} = [draw=none]
         	
         	\node[afemnode] (S) at (0,0) {\texttt{SOLVE}};
          \node[left=2em of S] (S-) {};
          \node[afemnode,right=2em of S] (E) {\texttt{ESTIMATE}};
         	\node[afemnode, right=2em of E] (M) {\texttt{MARK}};
         	\node[afemnode, right=2em of M] (R) {\texttt{REFINE}};
          \node[afemnode, right=2em of R] (S2) {\texttt{SOLVE}};
          \node[right=2em of S2] (S2+) {};
         	
         	\draw[afemarrow] (S) -- (E);
          \draw[afemarrow] (E) -- (M); 
         	\draw[afemarrow] (M) -- (R);
          \draw[afemarrow] (R) -- (S2);
         	
         	\draw[afemarrow] (S-) -- (S);
          \draw[afemarrow] (S2) -- (S2+);
         \end{tikzpicture}
\end{center} 
\noindent

In applications, the efficient approximation of certain derived \emph{quantities of interest} of the PDE solution is often the main target, while the optimal approximation of the PDE solution itself is only secondary. These \emph{quantities of interest} are typically modeled by \emph{goal functionals} and computationally treated as right-hand sides in the adjoint PDE problem. An example of such goals is the mean value of the PDE solution in a certain subdomain. 
 
Goal-oriented adaptivity strives to strike an algorithmic balance between singularities arising from the primal problem associated with the approximation of the PDE solution and with those that originate from the dual (adjoint) problems. 
For the case of a single quantity of interest, the literature on goal-oriented adaptivity is fairly mature; see, e.g.,~\cite{MR1352472, MR2009692, gs2002, ms2009} for seminal contributions in the field and~\cite{bet2011, fpz2016, MR4250624, bip2020, bbimp2022, bgip2023, bbps2025} for some more recent contributions. 

More formally (and with further details provided below), let $u^\star$ denote the exact solution to the considered PDE and let $G$ denote a linear goal functional that defines the \emph{quantity of interest}, $G(u^\star)$. Goal-oriented adaptivity then seeks to control the \emph{goal error} 
$|G(u^\star) - G(u_H^\star)|$,
where $u_H^\star$ denotes the finite element (FE) approximation of $u^\star$ in some discrete space $\XX_H$. A crucial feature of goal-oriented algorithms is that, via dualization techniques (see, e.g.,~\cite{gs2002}), this goal error can be estimated and bounded by the product of the primal and dual errors. This product allows, roughly speaking, doubled convergence rates compared to the best possible approximation of the PDE solution itself; see~\cite{ms2009} for the first result on optimal convergence rates for goal-oriented AFEM for the Poisson model problem.

Motivated by multiphysics applications such as multiphase flow or electromagnetics where multiple quantities of interest arise naturally, the algorithmic treatment of multiple goal functionals has attracted growing attention in recent years. Nonetheless, due to the technical challenges, only few works address the case of multiple quantities of interest; see, e.g.,~\cite{hh2003, h2008, elw2019, elnww2020}. In this setting, there are $2 \le N \in \N$ goals denoted by $G_j$ for $1 \le j \le N$. 

A key achievement of these works is the design of algorithms that require only two problem solves per mesh level in the \texttt{SOLVE} module—namely, the primal problem and \emph{one} suitably combined multigoal dual problem. This significantly reduces the computational cost.
However, forming this combined multigoal quantity requires knowledge of the sign of each goal-error difference, 
\[
\mathrm{sgn}(G_j(u^\star) - G_j(u_H^\star)) \quad \text{ for all } j = 1, \dots, N,
\]
which is not available in practice. The work~\cite{elw2019} observes that, under a strong saturation assumption, these signs can be recovered using the computable quantities
\[
\mathrm{sgn}(G_j(u_{h}^\star) - G_j(u_H^\star)) \quad \text{ for all } j = 1, \dots, N,
\] 
for some enriched FE space $\XX_h \supseteq \XX_H$ with corresponding Galerkin solution $u_h^\star$.

This paper presents a new multigoal-oriented AFEM with $N$ goals (NGO-AFEM) that overcomes this limitation. Our approach achieves this by cycling through all dual problems with an appropriately modified \texttt{MARK} module, while still solving only two problems in the \texttt{SOLVE} module at each mesh level. This new approach also allows for a rigorous convergence analysis that culminates in the proof of optimal convergence rates. Details on the new \texttt{MARK} module are provided below.

\subsection{Model problem}
Let $\Omega \subset \R^d$ be a bounded Lipschitz domain with $d \in \N$.
Let $\boldsymbol{A} \in [L^\infty(\Omega)]^{d \times d}_{\textup{sym}}$, $f \in L^2(\Omega)$, and $\f \in [L^2(\Omega)]^d$.
We consider the symmetric linear elliptic partial differential equation
\begin{equation}\label{eq:modelProblemStrongform}
  - \div(\boldsymbol{A} \nabla u^\star) = f - \div \f
  \quad \text{in } \Omega
  \quad \text{subject to } \quad
  u^\star = 0
  \quad \text{on } \Gamma \coloneqq \partial\Omega.
\end{equation}
The weak formulation of~\eqref{eq:modelProblemStrongform} seeks $u^\star \in H_0^1(\Omega)$ that satisfies
\begin{equation}\label{eq:weak_formulation}
  a(u^\star, v) \coloneqq \int_\Omega (\boldsymbol{A} \nabla u^\star)^\top \nabla v \d{x}
  = \int_\Omega \big( fv + \f^\top \nabla v \big) \d{x} \eqqcolon F(v)
  \quad \text{ for all } v \in H_0^1(\Omega).
\end{equation}
The Lax--Milgram lemma guarantees existence and uniqueness of the weak solution $u^\star \in H^1_0(\Omega)$ to \eqref{eq:weak_formulation} given that the diffusion coefficient $\boldsymbol{A}(x) \in \R^{d \times d}_{\rm sym}$ is uniformly positive definite, i.e., there exists $\alpha \in \R$ such that
\begin{equation}
  \inf_{x \in \Omega} \inf_{\xi \in \R^d \backslash \{0\}} \frac{\xi^\top \boldsymbol{A}(x) \xi}{\xi^\top \xi}
  \ge \alpha > 0.
\end{equation}
Under this assumption, $a(\cdot,\cdot)$ is a scalar product and $\enorm{v}^{2} \coloneqq a(v, v)$ defines an equivalent energy norm on $H^1_0(\Omega)$.
However, we are not interested in the exact solution $u^\star$ itself, but only in derived quantities of interest formulated as
evaluations of linear goal functionals
\begin{equation}
  G_j(u^\star) \coloneqq \int_\Omega \big( g_j u^\star + \g_j^\top \nabla u^\star \big) \d{x}
  \quad \text{for all } j = 1, \dots, N \in \N \text{ with } 2 \le N,
\end{equation}
where $g_j \in L^2(\Omega)$ and $\g_j \in [L^2(\Omega)]^d$ are given. For each goal functional, let
$z_j^\star \in H^1_0(\Omega)$ denote the corresponding dual solution to
\begin{equation}
  a(v, z_j^\star) = G_j(v)
  \quad \text{for all } v \in H^1_0(\Omega).
\end{equation}
To approximate $G_j(u^\star)$, consider a finite-dimensional subspace $\XX_H \subset H^1_0(\Omega)$. The Lax--Milgram lemma ensures existence and uniqueness of FE solutions $u_H^\star, z_{j,H}^\star \in \XX_H$ to
\begin{equation}\label{eq:discrete}
  a(u_H^\star, v_H) = F(v_H)
  \quad \text{and} \quad
  a(v_H, z_{j,H}^\star) = G_j(v_H)
  \quad \text{for all } v_H \in \XX_H,
\end{equation}
respectively.
For each goal $G_j$, there holds the classical error estimate from~\cite{gs2002,ms2009},
\begin{equation}
  |G_j(u^\star) - G_j(u_H^\star)|
  = | a(u^\star - u_H^\star, z_j^\star - z_{j,H}^\star)| 
  \le \enorm{u^\star - u_H^\star} \, \enorm{z_j^\star - z_{j,H}^\star}.
\end{equation}
Considering multiple goals, the adaptive strategy shall drive down the multigoal error
\begin{equation}\label{eq:multigoal:error}
  \sum_{j=1}^N |G_j(u^\star) - G_j(u_H^\star)|
  \le
  \enorm{u^\star - u_H^\star} \,
  \Big(\sum_{j=1}^N \, \enorm{z_j^\star - z_{j,H}^\star}\Big)
  \longrightarrow 0 \quad \text { as } \dim \XX_H \to \infty,
\end{equation}
with optimal algebraic rate $s + t$, where $s > 0$ is the best rate for approximating $u^\star$, while $t = \min_j t_j > 0$ is the best rate for approximating all $z_j^\star$. The error terms in~\eqref{eq:multigoal:error} can be reliably estimated from above by residual-based error estimators $\eta_H(u_H^\star)$ and $\zeta_{j,H}(z_H^\star)$ for all $ 1 \le j \le N$ for both the primal and dual problems from~\eqref{eq:discrete}, respectively. Thus, the analysis boils down to the convergence of the so-called \emph{multigoal-error estimator}
\begin{equation}\label{eq:multigoal:estimator:intro}
  \Crel^{-2} \, \sum_{j=1}^N |G_j(u^\star) - G_j(u_H^\star)|
  \le 
  \eta_H(u_H^\star) \, \sum_{j=1}^N \zeta_{j,H}(z_{j,H}^\star)
  \eqqcolon 
   \Delta_H 
   \longrightarrow 0 \text { as } \dim \XX_H \to \infty,
\end{equation}
where $0 < \Crel$ is the generic reliability constant of residual-based error estimators.
One way to ensure convergence of~\eqref{eq:multigoal:estimator:intro}  would be to solve the discrete primal as well as \emph{all} discrete dual
problems for the arising meshes $\TT_H$, i.e., one would need to solve $N+1$ finite element systems per mesh.
Instead, the proposed algorithm solves, for all arising meshes $\TT_H$, only the discrete primal as well as
\emph{one} discrete dual problem.

\subsection{Main results}
On each mesh level, the proposed multigoal-oriented AFEM with $N$ goals solves for the primal solution $u_H^\star$ and one dual solution $z_{j,H}^\star$ for one $j \in \{1, \dots, N\}$, where $j$ is cycled through as the mesh is refined. The current goal considered is called \emph{active}. The key novelty of our approach is a new marking strategy in the \texttt{MARK} module. We start with determining the quasi-minimal set of elements that need to be refined to reduce either the primal or the active dual error estimator sufficiently. This is the strategy proposed in~\cite{ms2009} for a goal-oriented AFEM with a single goal for the Poisson model problem, and refined and analyzed in~\cite{fpz2016} for general second-order linear elliptic PDEs in the Lax--Milgram setting.

We evaluate whether the active dual error estimator is still large compared to the dual error estimators of all $N-1$ previously active goals. If this is the case, we call it \emph{regular marking} and proceed with the usual marking strategy from~\cite{fpz2016}. Otherwise, we constrain the number of marked elements to be no larger than in the previous step, calling it \emph{irregular marking}. 

As a first important result, we prove that this extended marking strategy still guarantees that the multigoal-error estimator $\Delta_H$ from~\eqref{eq:multigoal:estimator:intro} contracts linearly up to some multiplicative constant; see Theorem~\ref{theorem:multigoal:linearConvergence} below.

As the main ingredient for optimal convergence rates, we realize that the cardinality control in the irregular marking allows to prove a vital cardinality estimate; see Lemma~\ref{lemma:marking} below. Exploiting this result finally establishes optimal convergence rates for the proposed multigoal-oriented AFEM with $N$ goals; see Theorem~\ref{theorem:multigoal:optimalRates} below.

\subsection{Outline}
The paper is organized as follows. Section~\ref{section:algorithmN} introduces the new multigoal-oriented adaptive algorithm. In section~\ref{section:convergenceN}, we present the proof of linear convergence, while section~\ref{section:ratesN} is devoted to the proof of optimal convergence rates. Section~\ref{section:numerics} presents numerical experiments that underline our findings, before section~\ref{section:conclusion} concludes the paper.
\section{Multigoal-oriented adaptive FEM} \label{section:algorithmN}
In this section, we formulate a multigoal-oriented AFEM that can simultaneously deal with $N \in \N$ linear goal functionals $G_1, \dots, G_N \colon \XX^\star \to \R$, while only solving two discrete problems per adaptive step.

\subsection{Mesh refinement}
\label{subsection:mesh-refinement}
We introduce some notation regarding mesh refinement. Let $\TT_0$ be a given conforming initial triangulation of $\Omega$. For mesh refinement, we employ newest vertex bisection (NVB); see~\cite{s2008, kpp2013, dgs2025}. 
For each mesh $\TT_H$ and given marked elements $\MM_H \subseteq \TT_H$, we denote by $\TT_h \coloneqq \texttt{refine}(\TT_H,\MM_H)$ the coarsest mesh where all $T \in \MM_H$ have been refined, i.e., $\MM_H \subseteq \TT_H \setminus \TT_h$. Moreover, we write $\TT_h \in \T(\TT_H)$, if $\TT_h$ results from $\TT_H$ by finitely many steps of refinement and use $\T \coloneqq \T(\TT_0)$ for brevity.

Each triangulation $\TT_H \in \T$ is associated with a conforming finite-dimensional FE space $\XX_H \subset H^1_0(\Omega)$ with piecewise polynomial ansatz functions of degree at most $p\in \N$, i.e.,
\begin{equation*}
	\XX_H
	\coloneqq
	\mathcal{S}^p_0(\TT_H)
	\coloneqq
	\set{v_H \in H_0^1(\Omega)\given \forall T \in \TT_H\colon v_H|_T \text{ is a piecewise polynomial of degree} \le p}.
\end{equation*}
Obviously, \(\XX_H \subset H_0^1(\Omega)\) is a closed subspace. Since we employ  NVB, $\TT_h \in \T(\TT_H)$ implies nestedness $\XX_H \subseteq \XX_h$ of the corresponding FE spaces.

\subsection{A~posteriori error estimation and axioms of adaptivity}
We suppose additional regularity $\boldsymbol{A}|_T \in [W^{1,\infty}(T)]^{d\times d}_{\textup{sym}}$ and $\f|_T, \g|_T \in [H^1(T)]^d$ for all $T \in \TT_H$ and define the local
contributions to the error estimators on each triangle $T \in \TT_H$ by
\begin{subequations}\label{eq:estimators}
\begin{equation}\label{eq:indicators}
  \begin{aligned}
    \eta_H(T; v_H)^2
    & \coloneqq
    h_T^2 \| f + \div(\boldsymbol{A} \nabla v_H - \f) \|_{L^2(T)}^2
    +
    h_T \| \jump{(\boldsymbol{A} \nabla v_H - \f )^\top \boldsymbol{n}} \|_{L^2(\partial T \cap \Omega)}^2,
    \\
    \zeta_{j,H}(T; v_{H})^2
    & \coloneqq
    h_T^2 \| g_j + \div(\boldsymbol{A} \nabla v_{H} - \g_j) \|_{L^2(T)}^2
    +
    h_T \| \jump{(\boldsymbol{A} \nabla v_{H} - \g_j )^\top \boldsymbol{n}} \|_{L^2(\partial T \cap \Omega)}^2,
  \end{aligned}
\end{equation}
where $h_T \coloneqq |T|^{1/d}$ denotes the local mesh-size. These are the standard residual-based indicators associated to~\eqref{eq:discrete}; see, e.g.,~\cite{ao2000, v2013} for further details. 
For any subset $\UU_H \subseteq \TT_H$ and all $ v_H \in \XX_H$, we define the error estimators by
\begin{equation}
  \eta_H(\UU_H; v_H)^2
  \coloneqq
  \sum_{T \in \, \UU_H} \eta_H(T; v_H)^2
  \quad \text{and} \quad
  \zeta_{j,H}(\UU_H; v_{H})^2
  \coloneqq
  \sum_{T \in \, \UU_H} \zeta_{j,H}(T; v_{H})^2.
\end{equation}
\end{subequations}
Furthermore, we abbreviate the global error estimators by $\eta_H(v_H) \coloneqq \eta_{H}(\TT_H; v_H)$ and
$\zeta_{j,H}(v_{H}) \coloneqq \zeta_{j,H}(\TT_H; v_{H})$.
Moreover, the argument $v_H$ is omitted if the corresponding discrete solution is used, e.g., $\eta_H(\UU_H) \coloneqq \eta_H(\UU_H,u_H^{\star})$. 
With this understanding, the \textsl{a~posteriori} error estimators satisfy the following estimator properties.
\begin{lemma}[Axioms of adaptivity~\cite{axioms}]
  \label{lemma:axioms}
  Any of the error estimators $\mu_H \in \{\eta_H, \zeta_{1,H}, \ldots, \zeta_{N,H}\}$ from~\eqref{eq:estimators}
  satisfy the following: There exist $\Cstab, \Crel, \Cdrel,
  \Cmon> 0$ and $0 < \qred < 1$ such that, for any
  triangulation $\TT_H \in \T$, any refinement $\TT_h
  \in\T(\TT_H)$, any subset $\UU_H \subseteq \TT_h \cap \TT_H$, and arbitrary $v_H \in \XX_H$, $v_h \in \XX_h$, the following hold:
  \begin{enumerate}[font=\upshape, label=\textbf{\textrm{(A\arabic*)}}, ref=A\arabic*]
    \item
      \label{axiom:stability}
      \textbf{\textup{Stability:}} 
      \(\mkern36mu  \vert \mu_h (\UU_H; v_h) -
        \mu_H(\UU_H; v_H) \vert
      \le \Cstab \, \enorm{v_h - v_H};\)
    \item \label{axiom:reduction}\textbf{\textup{Reduction:}} 
     \(\mkern104mu \mu_h(\TT_h \setminus \TT_H; v_H)
        \le
      \qred \, \mu_H(\TT_H \setminus \TT_h; v_H); \)
    \item
      \label{axiom:reliability}
      \textbf{\textup{Reliability:}}
      \(\mkern136mu \enorm{u^\star - u_H^\star}
        \le
        \Crel \, \mu_H;\)      
    \item[\textbf{(A4)}]\refstepcounter{enumi}
      \label{axiom:discrete_reliability}
      \textbf{\textup{Discrete reliability:}}
      \(\mkern62mu
        \enorm{u_h^\star - u_H^\star}
        \le
      \Cdrel \, \mu_H(\TT_H \backslash \TT_h); \)
      \renewcommand{\theenumi}{QM}
    \item[\textbf{(QM)}]\refstepcounter{enumi}
      \label{axiom:qm}
      \textbf{\textup{Quasi-monotonicity:}}
      \(\mkern122mu \mu_h \le \Cmon \mu_H. \)
  \end{enumerate}
  The constant $\Crel$ depends only on the diffusion matrix $\boldsymbol{A}$, the uniform
  \(\gamma\)-shape
  regularity of
  all
  $\TT_H \in \T$, and on
  the space dimension $d$, while $\Cstab$ and
  \(\Cdrel\) additionally depend on the polynomial degree
  $p$.
  For NVB, reduction~\eqref{axiom:reduction} holds with
  $\qred \coloneqq 2^{-1/(2d)}$.
  Moreover, 
  discrete reliability~\eqref{axiom:discrete_reliability} yields reliability~\eqref{axiom:reliability} with $\Crel \le \Cdrel$ and 
  the constant in
  quasi-monotonicity~\eqref{axiom:qm}
  satisfies
  $\Cmon \le 1 + \Cstab \, \Cdrel$.
  \qed
\end{lemma}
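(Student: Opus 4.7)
The plan is to follow the standard axioms-of-adaptivity framework from~\cite{axioms}; since the primal estimator $\eta_H$ and each dual estimator $\zeta_{j,H}$ share the exact same residual-based structure (only the lower-order data $(f,\f)$ vs.\ $(g_j,\g_j)$ differ), it suffices to prove the five properties for a generic residual estimator $\mu_H$ associated with a problem of the form~\eqref{eq:weak_formulation}. I would prove the properties in the order \eqref{axiom:stability}--\eqref{axiom:reduction}--\eqref{axiom:discrete_reliability}--\eqref{axiom:reliability}--\eqref{axiom:qm}, since \eqref{axiom:reliability} is a trivial consequence of \eqref{axiom:discrete_reliability} (as already noted in the statement) and \eqref{axiom:qm} follows by combining the first four.

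For stability \eqref{axiom:stability}, I would apply the reverse triangle inequality elementwise on the subset $\UU_H \subseteq \TT_h \cap \TT_H$ (where $h_T$ and the triangulation coincide on both meshes) to bound the difference of the volume and jump residuals by $L^2$-norms of $\boldsymbol{A}\nabla(v_h-v_H)$; scaled inverse estimates (trace inequality for the jump term, $h_T^{-1}$-scaling for the volume term) then yield the local bound $\Cstab\,\enorm{v_h-v_H}_T$, and summing over $\UU_H$ gives the claim. For reduction \eqref{axiom:reduction}, the key observation is that for $T'\in\TT_h$ with $T'\subsetneq T\in\TT_H\setminus\TT_h$, NVB gives $|T'|\le|T|/2$, whence $h_{T'}^2\le 2^{-1/d}h_T^2$; since the data-dependent residual $f+\div(\boldsymbol{A}\nabla v_H-\f)$ is the same on $T'$ and $T$ and since the interior jumps on the new interior facets vanish for the fixed discrete function $v_H$ (polynomials on $T$), a direct summation yields $\qred=2^{-1/(2d)}$.

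Discrete reliability \eqref{axiom:discrete_reliability} is the technical core. I would use Galerkin orthogonality $a(u_h^\star - u_H^\star, v_H)=0$ for all $v_H\in\XX_H$, write $e_h \coloneqq u_h^\star - u_H^\star \in \XX_h$, and exploit a Scott--Zhang-type interpolant $J_H\colon\XX_h\to\XX_H$ that preserves degrees of freedom on nodes whose patch lies entirely in $\TT_H\cap\TT_h$. Testing with $e_h - J_H e_h$, elementwise integration by parts, and the local projection/approximation properties of $J_H$ reduce the bound to a sum of the residual indicators over the refinement region $\TT_H\setminus\TT_h$ times $\enorm{e_h}$, which gives \eqref{axiom:discrete_reliability} after absorbing. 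Reliability \eqref{axiom:reliability} follows by the classical density argument (replacing $\XX_h$ by a dense sequence exhausting $H^1_0(\Omega)$), as made precise in~\cite{axioms}, giving $\Crel\le\Cdrel$.

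Finally, for quasi-monotonicity \eqref{axiom:qm}, I would split
\begin{equation*}
\mu_h^2 \;=\; \mu_h(\TT_h\cap\TT_H;u_h^\star)^2 + \mu_h(\TT_h\setminus\TT_H;u_h^\star)^2,
\end{equation*}
apply \eqref{axiom:stability} on $\TT_h\cap\TT_H$ to reduce to $\mu_H(\TT_h\cap\TT_H;u_H^\star)^2 + \Cstab^2\enorm{u_h^\star-u_H^\star}^2$, apply \eqref{axiom:stability}+\eqref{axiom:reduction} on $\TT_h\setminus\TT_H$ (via the Young inequality with generic constants), and then absorb $\enorm{u_h^\star-u_H^\star}$ using \eqref{axiom:discrete_reliability}, yielding $\Cmon\le 1+\Cstab\Cdrel$. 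The main obstacle I anticipate is the construction and analysis of the interpolant $J_H$ for \eqref{axiom:discrete_reliability}: one must carefully identify the nodes (and their patches) that are left untouched by refinement in order to exploit Galerkin orthogonality correctly, and the shape-regularity of NVB refinements is exactly what is needed to control the interpolation constants uniformly in $\TT_H\in\T$.
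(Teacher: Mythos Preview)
Your sketch is correct and follows exactly the standard route of~\cite{axioms}. Note, however, that the paper does \emph{not} prove this lemma at all: the statement carries a \qed\ and is merely cited from~\cite{axioms} as a known result for residual estimators of the form~\eqref{eq:indicators}. So there is nothing to compare---your proposal simply fills in what the paper intentionally omits, and it does so along the same lines as the cited reference (inverse/trace estimates for \eqref{axiom:stability}, the NVB volume bound $|T'|\le|T|/2$ for \eqref{axiom:reduction}, a Scott--Zhang-type quasi-interpolant localized to the refinement region for \eqref{axiom:discrete_reliability}, and the combination of \eqref{axiom:stability}--\eqref{axiom:reduction}--\eqref{axiom:discrete_reliability} for \eqref{axiom:qm}).
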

We recall a well-known result which follows by elementary calculus from stability~\eqref{axiom:stability} and reduction~\eqref{axiom:reduction} of the residual error estimator from~\eqref{eq:estimators}.
The statement is implicitly found in~\cite{ckns2008, axioms} and explicitly formulated in~\cite[Lemma~7]{bbps2025}. 

\begin{lemma}\label{lemma:stability+reduction}
Suppose stability~\eqref{axiom:stability} and reduction~\eqref{axiom:reduction}.
Let $\TT_H \in \T$ and $\TT_h \in \T(\TT_H)$. Let $v_H \in \XX_H$ and $v_h \in \XX_h$.
Then, there holds stability 
\begin{equation}\label{eq:stability}
 \mu_h(v_h) 
 \le 
 \mu_H(v_H) 
 + 
 \Cstab \,\enorm{v_H - v_h} 
 \quad \text{ for any } \mu_H \in \{\eta_H, \zeta_{1,H}, \ldots, \zeta_{N,H}\}.
\end{equation}
Moreover, let $0 < \theta \le 1$ and suppose the D\"orfler marking criterion $\theta \, \mu_H(v_H)^2 \le \mu_H(\TT_H \backslash \TT_h; v_H)^2$ with respect to the refined elements $\TT_H \backslash \TT_h$.
Then, with $0 < \qest \coloneqq [1-(1-\qred^2)\theta]^{1/2} < 1$, there holds
\begin{equation}\label{eq:reduction}
 \mu_h(v_h) 
 \le
  \qest \, \mu_H(v_H) 
  + \Cstab \,\enorm{v_H - v_h}
  \quad 
  \text{ for any } \mu_H \in \{\eta_H, \zeta_{1,H}, \ldots, \zeta_{N,H}\}. \hfill  \qed
\end{equation}
\end{lemma}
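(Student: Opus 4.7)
The plan is to decompose the fine mesh as $\TT_h = (\TT_h \cap \TT_H) \cup (\TT_h \setminus \TT_H)$, bound $\mu_h(v_H)$ by means of axioms~\eqref{axiom:stability} and~\eqref{axiom:reduction}, and then transfer the bound from $v_H$ to $v_h$ through a same-mesh continuity step. This gives \eqref{eq:stability} directly, and sharpening the intermediate step under the Dörfler hypothesis delivers \eqref{eq:reduction}.

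First, I would observe that axiom~\eqref{axiom:stability} applied with $v_h$ replaced by $v_H$ (which is allowed since $v_H \in \XX_H \subseteq \XX_h$) yields $\mu_h(\UU_H; v_H) = \mu_H(\UU_H; v_H)$ for every $\UU_H \subseteq \TT_h \cap \TT_H$, since the right-hand side vanishes. Combining this with reduction~\eqref{axiom:reduction} on the refined part gives
\begin{align*}
  \mu_h(v_H)^2
  &= \mu_H(\TT_h \cap \TT_H; v_H)^2 + \mu_h(\TT_h \setminus \TT_H; v_H)^2 \\
  &\le \mu_H(\TT_h \cap \TT_H; v_H)^2 + \qred^2 \, \mu_H(\TT_H \setminus \TT_h; v_H)^2
   \le \mu_H(v_H)^2,
\end{align*}
where the last inequality uses $\qred < 1$. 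A same-mesh continuity step, which is \eqref{axiom:stability} applied to the trivial refinement $\TT_h \to \TT_h$, then gives $\mu_h(v_h) \le \mu_h(v_H) + \Cstab \, \enorm{v_h - v_H}$, and chaining the two estimates proves~\eqref{eq:stability}.

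For \eqref{eq:reduction}, I would sharpen the middle inequality above via the Dörfler hypothesis, which is equivalent to $\mu_H(\TT_h \cap \TT_H; v_H)^2 \le (1-\theta)\,\mu_H(v_H)^2$. Rewriting the middle step with this in mind yields
\[
  \mu_h(v_H)^2 \le \mu_H(v_H)^2 - (1-\qred^2)\,\mu_H(\TT_H \setminus \TT_h; v_H)^2 \le \bigl[1 - (1-\qred^2)\theta\bigr] \mu_H(v_H)^2 = \qest^2 \, \mu_H(v_H)^2.
\]
A further appeal to same-mesh continuity produces \eqref{eq:reduction}.

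The main, and essentially only, obstacle is the (minor) notational point that the ``same-mesh'' continuity step $\mu_h(v_h) \le \mu_h(v_H) + \Cstab\,\enorm{v_h - v_H}$ is not literally \eqref{axiom:stability} as stated, because $v_H$ may lie in a space strictly smaller than $\XX_h$. It is, however, the trivial-refinement case of \eqref{axiom:stability} and, moreover, follows directly from the Lipschitz dependence of the residual indicators~\eqref{eq:indicators} on their discrete argument, with the same constant $\Cstab$. Once this is acknowledged, both statements follow by the elementary calculus sketched above.
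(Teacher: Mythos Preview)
Your argument is correct and is precisely the ``elementary calculus'' the paper alludes to; the paper itself does not give a proof but only cites~\cite{ckns2008, axioms} and~\cite[Lemma~7]{bbps2025}. Your one worry about the same-mesh continuity step is not an issue: since $v_H \in \XX_H \subseteq \XX_h$ and $\TT_h \in \T(\TT_h)$, axiom~\eqref{axiom:stability} with both meshes equal to $\TT_h$ and $\UU_H = \TT_h$ literally gives $|\mu_h(v_h) - \mu_h(v_H)| \le \Cstab\,\enorm{v_h - v_H}$.
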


\subsection{Multigoal-oriented adaptive algorithm}
To present the algorithm, we need the following notation: 
For a marking parameter $0 < \theta \le 1$, a mesh $\TT_H$, and a sequence $\mu_H \in \{\eta_H, \zeta_{1,H}, \ldots, \zeta_{N,H}\}$ of corresponding refinement indicators, we let
\begin{equation}
  \M[\theta, \infty; \TT_H, \mu_H]
  \coloneqq
  \set{\UU_H \subseteq \TT_H \given \theta \, \mu_H^2 \le \mu_H(\UU_H; T)^2}
\end{equation}
denote the set of all subsets $\UU_H$ of $\TT_H$ that satisfy the D\"orfler marking
criterion~\cite{doerfler1996}, and, for $1 \le \Cmark$ and $\# \UU_H^{\textup{min}} \coloneqq \min_{\UU_H \in \M[\theta,\infty; \TT_H, \mu_H]} \#\UU_H$, we define
\begin{equation}
  \M[\theta, \Cmark; \TT_H, \mu_H]
  \coloneqq
  \set{\MM_H \in \M[\theta, \infty; \TT_H, \mu_H]
  \given 
  \#\MM_H \le \Cmark \, \# \UU_H^{\textup{min}}}
\end{equation}
to be the set of all those subsets $\UU_H$ that are quasi-minimal, i.e., minimal up to the fixed factor $\Cmark$.
The following algorithm builds upon the marking strategy from~\cite{fpz2016} in step~\eqref{algorithm:mark:fpz} but one could alternatively employ the strategy from~\cite{ms2009} while still obtaining the same results.
The algorithm reads as follows.

\begin{algorithm}[multigoal-oiented adaptive FEM (NGO-AFEM)]\label{algorithm:Ngoal}
\textbf{Input:} Initial mesh $\TT_0$ of $\Omega$, goals $G_k$ for $1 \le k \le N$ with $2 \le N$, and adaptivity parameters $0 < \theta \le 1$,\, $0 < \varrho_{\textup{irr}} < 1/(N-1)$, and $1 \le \Cmark$. Define $\zetaa_{-k} \coloneqq 0$ for $1 \le k \le N$.

\noindent\textbf{\textup{Loop:}} For all $\ell = 0, 1, 2, \dots$, repeat the following steps~{\rm(i)--(iv)}:
\begin{enumerate}[label = \rm (\roman*), ref = \rm \roman*, font = \upshape]
\item\label{algorithm:solve:primal} \textup{\texttt{SOLVE} \textbf{(primal problem) \texttt{\& ESTIMATE}\textbf{.}}} 
Compute the primal discrete solution $u_\ell^\star \in \XX_\ell$ to~\eqref{eq:discrete} and the corresponding refinement indicators $\eta_\ell(T)$ from~\eqref{eq:estimators} for all $T \in \TT_\ell$, and pick $\MM_\ell^u \in \M[\theta, \Cmark; \TT_\ell, \eta_\ell]$.
\item\label{algorithm:solve:dual} \textup{\texttt{SOLVE} \textbf{(one dual problem) \texttt{\& ESTIMATE}\textbf{.}}} 
\begin{enumerate}[label=\rm(\alph*), ref=\theenumi.\alph*, font = \upshape]
\item \label{algorithm:step:even}
Define $j \coloneqq {\rm mod}(\ell,N) + 1$, i.e., we cycle through $j = 1, \dots, N$ as $\ell$ increases.
\item\label{algorithm:step:odd}
Compute one dual discrete solution $z_{j,\ell}^\star \in \XX_\ell$ to~\eqref{eq:discrete} and the corresponding refinement indicators $\zetaa_\ell(T) \coloneqq \zeta_{j,\ell}(T)$ from~\eqref{eq:estimators} for all $T \in \TT_\ell$, and pick $\MM_\ell^z \in \M[\theta, \Cmark; \TT_\ell, \zeta_{j,\ell}]$.
\end{enumerate}
\item \label{algorithm:markall} \textup{\textbf{\texttt{MARK}.}} Determine the set of marked elements $\MM_\ell \subseteq \TT_\ell$ as follows:
\begin{enumerate}[label=\rm(\alph*), ref=\theenumi.\alph*, font = \upshape]
\item\label{algorithm:mark:fpz}
Pick $\MM_\ell^{\rm min} \in \{ \MM_\ell^u, \MM_\ell^z \}$ with $\#\MM_\ell^{\rm min} = \min \{ \#\MM_\ell^u, \#\MM_\ell^z \}$ and choose $\MM_\ell^{uz} \subseteq \MM_\ell^u \cup \MM_\ell^z$ such that $\MM_\ell^{\rm min} \subseteq \MM_\ell^{uz}$ and $\#\MM_\ell^{uz} \le \Cmark \# \MM_\ell^{\rm min}$.
\item\label{algorithm:mark} 
\textup{\textbf{If}} \quad   
$ \varrho_{\textup{irr}} \max\limits_{i=1,\dots,N-1} \, \zetaa_{\ell-i} \le \zetaa_\ell$, 

\textup{\textbf{then}} 
\quad choose $\MM_\ell \coloneqq \MM_\ell^{uz}$; \hfill \emph{\textbf{(regular marking)}}

\textup{\textbf{else}}$\mkern6mu$ \quad choose $\MM_\ell \subseteq \MM_\ell^{uz}$ with $\#\MM_\ell \le \#\MM_{\ell-1}$; \hfill \emph{\textbf{(irregular marking)}}

i.e., the else-clause covers the cases in which $1 \le \ell $ and there exists $i \in \{1, \dots, N-1\}$ such that $\zetaa_\ell < \varrho_{\textup{irr}} \, \zetaa_{\ell-i}$.
\end{enumerate}
\item\label{algorithm:refine} \textup{\textbf{\texttt{REFINE}.}}
Employ newest vertex bisection to generate $\TT_{\ell+1} = \mathtt{refine}(\TT_\ell, \MM_\ell)$.
\end{enumerate}
\textbf{Output:} Sequence of adaptively generated triangulations $\TT_\ell$ and corresponding approximate goal values $G_1(u_\ell^\star), \dots, G_N(u_\ell^\star) \in \R$ for all $\ell \in \N_0$.
\end{algorithm}

In the following, we speak of \emph{regular marking} in step $\ell$ if $\MM_\ell^u \subseteq \MM_\ell$ or $\MM_\ell^z \subseteq \MM_\ell$ (and hence D\"orfler marking is applied for either the primal estimator or the active dual estimator).
Otherwise, we speak of \emph{irregular marking} in step $\ell$.

\begin{remark}
{\rm (i)} Note that for $N = 1$, Algorithm~\ref{algorithm:Ngoal} coincides with the usual GOAFEM algorithms from~\cite{ms2009, fpz2016} and no irregular marking step is present in the algorithm.

{\rm(ii)} Obviously, one could solve in step~\eqref{algorithm:solve:dual} for \emph{all discrete dual solutions} $z_{j,\ell}^\star$ for $j = 1, \dots, N$ and then determine $\MM_\ell^z$ with respect to the \emph{overall dual estimator} $\zeta_\ell(T)^2 \coloneqq \sum_{j=1}^N \zeta_{j,\ell}(T)^2$. 
This would allow to omit the additional marking step~\eqref{algorithm:mark} and to directly apply the results of~\cite{fpz2016} to see that R-linear convergence~\eqref{eq:multigoal:linearConvergence} of Theorem~\ref{theorem:multigoal:linearConvergence} and optimal convergence rates~\eqref{eq:multigoal:thm:optimal} of Theorem~\ref{theorem:multigoal:optimalRates} hold.
However, this requires to solve for $N+1$ FE solutions per adaptive step $\ell \in \N_0$. 
Instead, the proposed algorithm requires to solve only for two FE solutions per $\ell \in \N_0$, independently of the number $N$ of goal functionals. 
The important impact of the proposed algorithm is that Theorem~\ref{theorem:multigoal:linearConvergence} and Theorem~\ref{theorem:multigoal:optimalRates} transfer from the naive algorithm to the present (much more efficient) variant.

{\rm (iii)} The irregular marking strategy in step~\eqref{algorithm:mark} does not require that any elements are chosen for refinement, i.e., $\MM_\ell = \emptyset$ is admissible. In this case, the mesh remains unchanged, i.e., $\TT_{\ell+1} = \TT_\ell$. This case is also covered in our theory and practically corresponds to a more aggressive form of adaptive refinement (which, in the worst case, solves all dual problems for one refinement step); see also the experiment in subsection~\ref{experiment:nonsmooth}.

{\rm(iv)} If the marking step~\eqref{algorithm:mark} is omitted and $\MM_\ell = \MM_\ell^{uz}$ is used for all $\ell \in \N_0$, the R-linear convergence~\eqref{eq:multigoal:linearConvergence} of Theorem~\ref{theorem:multigoal:linearConvergence} remains valid, even with a simplified proof. 
However, the statement~\eqref{eq:multigoal:thm:optimal} of Theorem~\ref{theorem:multigoal:optimalRates} is weakened to 
\begin{equation*}
 \sup_{\ell \in \N_0}(\#\TT_\ell - \#\TT_0 + 1)^{s+t} \min_{j = 1, \dots, N} \eta_\ell\, \zeta_{j,\ell}
\le C_{\rm opt} \, \max_{j=1, \dots, N} \, \norm{u^\star}_{\mathbb{A}_s} \,\norm{z_j^\star}_{\mathbb{A}_t},
\end{equation*}
i.e., optimal convergence would only be guaranteed for the goal error with the \emph{minimal} estimator product, while the other products would potentially decay at suboptimal rates.
Clearly, this is not acceptable.
\end{remark}

The subsequent sections are devoted to the convergence analysis of Algorithm~\ref{algorithm:Ngoal}: We prove linear convergence in section~\ref{section:convergenceN} and optimal convergence rates in section~\ref{section:ratesN} below. 
\section{R-linear convergence}\label{section:convergenceN}
This section proves that the multigoal-oriented AFEM with $N$ goals (Algorithm~\ref{algorithm:Ngoal}) guarantees linear convergence of the multigoal-estimator product defined in~\eqref{eq:quasi-error:multigoal} below. We state R-linear convergence of the algorithm, which is the main result of this section.

\begin{theorem}[R-linear convergence of NGO-AFEM]\label{theorem:multigoal:linearConvergence}
Suppose stability~\eqref{axiom:stability}, reduction~\eqref{axiom:reduction}, reliability~\eqref{axiom:reliability}, and quasi-monotonicity~\eqref{axiom:qm}.
Let $0 < \theta \le 1$, recall $0 < \qest = [1-(1-\qred^2)\, \theta]^{1/2} < 1$ from Lemma~\ref{lemma:stability+reduction} and define the multigoal-estimator product
\begin{equation}\label{eq:quasi-error:multigoal}
 \Delta_\ell 
 = 
 \eta_{\ell}\, \sum_{i=1}^N \zeta_{i,\ell}
 \quad \text{ for all } \ell \in \N_0.
\end{equation}
Choose $0 < \varrho_{\textup{irr}}=\varrho_{\textup{irr}}[\theta] < 1$ such that $\qctr \coloneqq \qest + (N-1)\,\varrho_{\textup{irr}} < 1$.
Then, Algorithm~\ref{algorithm:Ngoal} guarantees the existence of constants $0<\Clin$ and $0 < \qlin < 1$ such that $\Delta_{\ell}$ satisfies
\begin{equation}
\label{eq:multigoal:linearConvergence}
 \Delta_{\ell+n} 
 \le 
 \Clin \,\qlin^{n} \, \Delta_\ell
 \quad \text{for all } \ell, n \in \N_0.
\end{equation}
The constants $\Clin$ and $\qlin$ depend only on $\theta$, $\varrho_{\textup{irr}}$, and $N$ as well as on the constants in~\eqref{axiom:stability}--\eqref{axiom:reliability} and~\eqref{axiom:qm}.
\end{theorem}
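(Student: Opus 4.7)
The plan is to deduce R-linear convergence of the product $\Delta_\ell = \eta_\ell\sum_{i=1}^N \zeta_{i,\ell}$ by obtaining R-linear estimates separately for the primal estimator $\eta_\ell$ and for the dual block $\sum_{i=1}^N \zeta_{i,\ell}$, and then combining them multiplicatively.

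For the primal part, I introduce the primal quasi-error $\Lambda_\ell^u := \bigl(\eta_\ell^2 + \gamma\,\enorm{u^\star - u_\ell^\star}^2\bigr)^{1/2}$ with $\gamma>0$ to be chosen. Lemma~\ref{lemma:stability+reduction} combined with Galerkin orthogonality (Pythagoras $\enorm{u^\star - u_{\ell+1}^\star}^2 + \enorm{u_{\ell+1}^\star - u_\ell^\star}^2 = \enorm{u^\star - u_\ell^\star}^2$) and Young's inequality yields, for $\gamma$ large enough, the familiar dichotomy: in any step with $\MM_\ell \supseteq \MM_\ell^u$ one has a strict contraction $\Lambda_{\ell+1}^u \le q_1\Lambda_\ell^u$ with explicit $q_1<1$ depending on $\qest$, while unconditionally $\Lambda_{\ell+1}^u \le (1+\varepsilon)\Lambda_\ell^u$ with $\varepsilon$ arbitrarily small. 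Reliability~\eqref{axiom:reliability} makes $\Lambda_\ell^u$ and $\eta_\ell$ equivalent. Analogous quasi-errors $\Lambda_\ell^{z_i}$ satisfy the same dichotomy with respect to the D\"orfler criterion for $\zeta_{i,\ell}$.

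For the dual block, the cycling structure of Algorithm~\ref{algorithm:Ngoal} is decisive: within any window of $N$ consecutive steps, each goal is the active one (i.e., $\zetaa$) exactly once, say at step $m_i$. At step $m_i$, precisely one of three cases occurs by the structure of~\eqref{algorithm:markall}: (a) regular marking with $\MM_\ell^z$ minimal, giving D\"orfler for $\zeta_{i,m_i}$ and hence a factor $\qest$ on $\Lambda_{m_i+1}^{z_i}$; (b) regular marking with $\MM_\ell^u$ minimal, in which case $\zeta_{i,m_i+1}$ satisfies only the unconditional stability estimate; or (c) irregular marking, which by construction certifies $\zetaa_{m_i} \le \varrho_{\textup{irr}}\,\max_{k=1,\dots,N-1} \zetaa_{m_i-k}$. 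Quasi-monotonicity~\eqref{axiom:qm} bounds every non-active $\zeta_{i,\ell}$ by $\Cmon$ times its value at the last step when $i$ was active, so the problem reduces to controlling the sequence of active estimators $(\zetaa_\ell)$. The key technical lemma I need is an $N$-step contraction for a suitable weighted envelope of $(\zetaa_{\ell-k})_{k=0}^{N-1}$ whose contraction factor is exactly $\qctr = \qest + (N-1)\varrho_{\textup{irr}}<1$: in the worst case, one step out of the cycle contributes the D\"orfler factor $\qest$ and the remaining $N-1$ each contribute the irregular-marking factor $\varrho_{\textup{irr}}$, and their sum is the right bookkeeping quantity because of the $\max$ in the irregular-marking test.

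Once R-linear convergence is established for $\eta_\ell$ and (via the $N$-step dual contraction combined with quasi-monotonicity) for $\sum_i \zeta_{i,\ell}$, the product $\Delta_\ell$ inherits R-linear convergence, and tracking constants yields the statement of Theorem~\ref{theorem:multigoal:linearConvergence} with $\Clin$ and $\qlin$ depending on $\theta$, $\varrho_{\textup{irr}}$, $N$, and the axiom constants of Lemma~\ref{lemma:axioms}. The main obstacle, where I expect to spend the most technical effort, is this $N$-step dual contraction: because the active goal changes at every step, a single $\zeta_{i,\cdot}$ enjoys D\"orfler reduction at most once per $N$-cycle; irregular steps may refine nothing at all; and the stability/Pythagoras noise generated in all the ``inactive'' steps must be carefully summed and absorbed using reliability so that the worst-case budget $\qest + (N-1)\varrho_{\textup{irr}}$ — rather than some boundedness estimate — survives as a strict contraction factor.
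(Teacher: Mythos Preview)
Your plan has a genuine gap at its very first step: you cannot establish R-linear convergence of $\eta_\ell$ and of $\sum_i \zeta_{i,\ell}$ \emph{separately}. The marking in Algorithm~\ref{algorithm:Ngoal}\eqref{algorithm:mark:fpz} keeps only the \emph{smaller} of $\MM_\ell^u$ and $\MM_\ell^z$ inside $\MM_\ell^{uz}$, so in a regular step D\"orfler is guaranteed for exactly one of the two estimators, not both. It is entirely consistent with the algorithm that $\MM_\ell^z$ is the smaller set in every single step; then $\MM_\ell^u \not\subseteq \MM_\ell$ for all $\ell$, the primal estimator never satisfies D\"orfler, and your dichotomy for $\Lambda_\ell^u$ only ever yields the $(1+\varepsilon)$ branch. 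The same obstruction hits the dual block symmetrically: your case (b) gives no reduction and no $\varrho_{\textup{irr}}$-smallness for the active $\zeta$, so you cannot assemble the budget $\qest+(N-1)\varrho_{\textup{irr}}$ for the dual block alone. In short, neither factor individually is forced to contract; only their product is.

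This is precisely why the paper never separates the factors. Its proof works with the product $\eta_\ell\,\zeta_{j,\ell}$ directly: in a regular step, whichever of $\eta$ or $\zetaa$ satisfies D\"orfler supplies the factor $\qest$ on the product via~\eqref{eq:reduction}, while the other factor is controlled by stability~\eqref{eq:stability}, yielding $\eta_{m+1}\zeta_{J[m],m+1}\le \qest\,\eta_m\zeta_{J[m],m}+\Cper\,{\sf r}_m$. Irregular steps are handled by a backward estimate $\eta_m\zeta_{J[m],m}\le \varrho_{\textup{irr}}\,\eta_{L[m]}\zeta_{J[L[m]],L[m]}+\text{remainder}$. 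Summing these product estimates over a window of $2N-1$ steps produces a perturbed contraction $\Delta_{\ell+2N-1}\le \qctr\,\Delta_\ell+\Rho_\ell$ with $\sum_k \Rho_k^2\lesssim \Delta_\ell^2$, from which R-linear convergence follows by a tail-sum criterion. To repair your argument you must abandon the separate-factor plan and carry the product through every estimate, as the paper does.
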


\begin{remark}
   {\textup{(i)}}
Note that the multigoal-estimator product $\Delta_\ell$ from~\eqref{eq:quasi-error:multigoal} is equivalent to the multigoal-estimator product $\widehat{\Delta}_\ell \coloneqq \eta_\ell \, \max_{i = 1, \ldots, N} \zeta_{i,\ell}$. 

   {\textup{(ii)}} The bound on the adaptivity parameter $\varrho_{\textup{irr}}$ can be explicitly calculated, since $\qred = 2^{-1/2d}$ is known and $\theta$ and $N$ are user-defined. Even choosing $\varrho_{\textup{irr}}$ slightly above this threshold does not affect the numerical performance of the algorithm; see the experiment in section~\ref{experiment:nonsmooth}. 
\end{remark}

The main observation to prove Theorem~\ref{theorem:multigoal:linearConvergence} is the following perturbed contraction for the multigoal-estimator product $\Delta_\ell$ from~\eqref{eq:quasi-error:multigoal}.
\begin{lemma}
Under the assumptions of Theorem~\ref{theorem:multigoal:linearConvergence}, for all $\ell \in \N$, there exists $0 \le \Rho_\ell$ such that the multigoal-estimator product $\Delta_{\ell}$ from~\eqref{eq:quasi-error:multigoal} satisfies
\begin{equation}\label{eq:lemma:contraction:multigoal}
 \Delta_{\ell+2N-1} 
 \le 
 \qctr \, \Delta_\ell + \Rho_\ell
 \quad \text{with} \quad
 \sum_{k=\ell}^\infty \Rho_{k}^2 \le \Csum \, \Delta_\ell^2.
\end{equation}
The constant $0 <\Csum$ depends only on $\Cstab$, $\Crel$, $\Cmon$, and $N$.
\end{lemma}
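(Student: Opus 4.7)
The plan is to establish the perturbed contraction by exploiting (i) the cyclic structure of the active goal, (ii) the stability/reduction bounds of Lemma~\ref{lemma:stability+reduction}, and (iii) a combinatorial bound on the ``cross-terms'' arising in irregular-marking steps, which is precisely what produces the factor $(N-1)$ in $\qctr$. My first step is a telescoping bootstrap: iterating the stability bound~\eqref{eq:stability}, for any estimator $\mu\in\{\eta,\zeta_{1},\ldots,\zeta_{N}\}$ with associated Galerkin solution $v_k^\star$, and any $\ell\le k\le k'$,
\begin{equation*}
 \mu_{k'} \le \mu_k + \Cstab \sum_{j=k}^{k'-1}\enorm{v_{j+1}^\star - v_j^\star},
\end{equation*}
which sharpens to $\mu_{k'}\le \qest\,\mu_k + \Cstab\sum_{j=k}^{k'-1}\enorm{v_{j+1}^\star - v_j^\star}$ whenever D\"orfler marking for $\mu$ was triggered at some intermediate step, via~\eqref{eq:reduction}. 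All telescoping tails are later gathered into $\Rho_\ell$.

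Next, I exploit that $j(k)={\rm mod}(k,N)+1$ cycles with period $N$: every goal $i\in\{1,\ldots,N\}$ is active at a unique step $k_i$ in the $N$-step interval $[\ell+N-1,\ell+2N-2]$. This choice guarantees $k_i-m\ge \ell$ for every $m\in\{1,\ldots,N-1\}$, so the irregular-marking condition invoked at $k_i$ necessarily refers to a step still inside the window. For each $i$, I then distinguish three cases at step $k_i$: \emph{(A)} regular marking with $\MM_{k_i}^z\subseteq\MM_{k_i}$, giving D\"orfler for $\zeta_i$ and hence $\zeta_{i,\ell+2N-1}\le \qest\,\zeta_{i,\ell}+\rho_i$ while $\eta_{\ell+2N-1}\le \eta_\ell+\rho$; \emph{(B)} regular marking with $\MM_{k_i}^u\subseteq\MM_{k_i}$, giving D\"orfler for $\eta$ and hence $\eta_{\ell+2N-1}\le \qest\,\eta_\ell+\rho$ while $\zeta_{i,\ell+2N-1}\le \zeta_{i,\ell}+\rho_i$; \emph{(C)} irregular marking, where by definition $\zeta_{i,k_i}=\zetaa_{k_i}<\varrho_{\textup{irr}}\,\zetaa_{k_i-m_i}=\varrho_{\textup{irr}}\,\zeta_{i'(i),k_i-m_i}$ for some $m_i\in\{1,\ldots,N-1\}$ and $i'(i)\coloneqq j(k_i-m_i)\ne i$ (since $m_i\not\equiv 0\pmod N$), which via stability on both sides of $k_i$ yields $\zeta_{i,\ell+2N-1}\le \varrho_{\textup{irr}}\,\zeta_{i'(i),\ell}+\rho_i$ and $\eta_{\ell+2N-1}\le \eta_\ell+\rho$.

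Multiplying the bounds on $\eta_{\ell+2N-1}$ and $\zeta_{i,\ell+2N-1}$ in each case and summing over the partition $I_A\cup I_B\cup I_C=\{1,\ldots,N\}$,
\begin{equation*}
 \Delta_{\ell+2N-1}
 \le \qest\,\eta_\ell\sum_{i\in I_A\cup I_B}\zeta_{i,\ell}
 +\varrho_{\textup{irr}}\,\eta_\ell\sum_{i\in I_C}\zeta_{i'(i),\ell}+\Rho_\ell.
\end{equation*}
The first sum is trivially bounded by $\sum_{j=1}^{N}\zeta_{j,\ell}$. For the second, the crucial combinatorial observation is that $i'(i)\ne i$, so each index $j\in\{1,\ldots,N\}$ can appear as $i'(i)$ for at most $N-1$ choices of $i\in I_C$, giving $\sum_{i\in I_C}\zeta_{i'(i),\ell}\le (N-1)\sum_{j=1}^{N}\zeta_{j,\ell}$. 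Combining yields $\Delta_{\ell+2N-1}\le (\qest+(N-1)\varrho_{\textup{irr}})\Delta_\ell+\Rho_\ell=\qctr\Delta_\ell+\Rho_\ell$.

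For the square-summability, $\Rho_\ell$ is a finite linear combination (of length $O(N)$) of products of estimators (bounded via quasi-monotonicity~\eqref{axiom:qm} by constant multiples of $\mu_\ell$) and energy increments $\enorm{v_{k+1}^\star-v_k^\star}$. Galerkin orthogonality yields the telescoping identity $\sum_{k\ge\ell}\enorm{v_{k+1}^\star-v_k^\star}^2\le \enorm{v^\star-v_\ell^\star}^2$, and reliability~\eqref{axiom:reliability} bounds the right-hand side by $\Crel^2\mu_\ell^2$; combined with Cauchy--Schwarz and the fact that each step index falls into only $O(N)$ windows, this gives $\sum_{k\ge\ell}\Rho_k^2\le\Csum\Delta_\ell^2$. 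The conceptual heart of the argument is the combinatorial factor $(N-1)$ in the assembly step; the main technical bottleneck I anticipate is the bookkeeping of the telescoping bootstrap, namely propagating each D\"orfler reduction across up to $2N-1$ stability applications while keeping all perturbation tails in a form that ultimately collapses under Galerkin orthogonality.
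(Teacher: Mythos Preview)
Your proposal is correct and follows essentially the same approach as the paper's proof. You identify the same active-goal window $\{\ell+N-1,\ldots,\ell+2(N-1)\}$, the same regular/irregular dichotomy, the same combinatorial argument that $i'(i)\ne i$ forces each index to appear at most $N-1$ times in the irregular sum, and the same square-summability via Galerkin orthogonality plus reliability; the only cosmetic difference is that the paper first forms the product $\eta_m\zeta_{j,m}$ and then telescopes, whereas you telescope the factors separately and multiply at the end.
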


\begin{proof}
The proof is divided into six steps.

\textbf{Step~1 (levelwise notational preliminaries).}
Suppose we are on level $\ell \in \N_0$. 
We define the map $J \colon \N_0 \to \{1,\dots, N\}$ by $J[\ell] \coloneqq {\rm mod}(\ell,N) + 1$, i.e., $J[\ell]$ denotes the active dual problem in step $\ell$ of Algorithm~\ref{algorithm:Ngoal}, i.e., $\zetaa_\ell = \zeta_{J[\ell], \ell}$. Moreover, by defining the map $L \colon \N \rightarrow \{\ell-1, \dots, \ell-N+1\}$ with $\zetaa_{L[\ell]} = \max_{i=1,\dots, N-1} \zetaa_{\ell-i}$, we also have the largest active goal in the last $N-1$ levels at hand, which is important for irregular marking.

For the remaining proof, it is convenient to define the following \emph{perturbation term}
\begin{equation}\label{eq:def:r_ell:multigoal}
 {\sf r}_\ell 
 \coloneqq
 \eta_\ell \, \sum_{i=1}^N  \, \enorm{z_{i,\ell+1}^\star - z_{i,\ell}^\star}
 + 
 \enorm{u_{\ell+1}^\star - u_\ell^\star} \,\sum_{i=1}^N \, \zeta_{i,\ell}
\end{equation}
with the corresponding multiplicative constant $\Cper \coloneqq \Cstab + (1+\Cmon)\,\Cstab^2\,\Crel$.

\textbf{Step~2 (forward contraction for regular marking in step $\boldsymbol{\ell}$).}
If $\MM_\ell^u \subseteq \MM_\ell$, then we employ reduction~\eqref{eq:reduction} for $\eta_\ell$ and stability~\eqref{eq:stability} for $\zeta_{J[\ell],\ell}= \zetaa_\ell$. 
This verifies
\begin{equation}\label{eq:step1:convergence:multigoal}
\begin{aligned}
 \eta_{\ell+1}\, \zeta_{J[\ell],\ell+1}
 &\le 
 \big[ \qest \,\eta_\ell + \Cstab\, \enorm{u_{\ell+1}^\star - u_\ell^\star} \big]
 \big[ \zeta_{J[\ell],\ell} + \Cstab \,\enorm{z_{J[\ell],\ell+1}^\star - z_{J[\ell],\ell}^\star} \big]
 \\&
 \le  \qest \, \eta_\ell \,\zeta_{J[\ell],\ell} + \Cstab \, {\sf r_\ell} + \Cstab^2 \, \enorm{u_{\ell+1}^\star - u_\ell^\star} \,\enorm{z_{J[\ell],\ell+1}^\star - z_{J[\ell],\ell}^\star}\\
& \le
 \qest \, \eta_\ell \,\zeta_{J[\ell],\ell} + \Cper \, {\sf r}_\ell.
\end{aligned}
\end{equation}
If $\MM_\ell^z \subseteq \MM_\ell$, then we employ stability~\eqref{eq:stability} for $\eta_\ell$ and reduction~\eqref{eq:reduction} for $\zeta_{J[\ell],\ell}$. This also guarantees~\eqref{eq:step1:convergence:multigoal} in case of exchanged roles.

\textbf{Step~3 (backward contraction for irregular marking in step $\boldsymbol{\ell}$).} 
If both inclusions $\MM_\ell^u \subseteq \MM_\ell$ and $\MM_\ell^z \subseteq \MM_\ell$ fail in step $\ell$, then Algorithm~\ref{algorithm:Ngoal}\eqref{algorithm:mark} guarantees that
\begin{equation}\label{eq:linearConvergence:irrMarking}
\zeta_{J[\ell],\ell}
= 
\zetaa_\ell 
<
\varrho_{\textup{irr}} \, \zetaa_{L[\ell]}
=
\varrho_{\textup{irr}} \, \zeta_{J[L[\ell]],L[\ell]}.
\end{equation}
Using this inequality and stability~\eqref{eq:stability} for the estimator $\eta_\ell$, we thus obtain
\begin{equation}\label{eq:step2:convergence:multigoal}
 \begin{aligned}
 \eta_\ell \, \zeta_{J[\ell],\ell}
 \mkern18mu & \stackrel{\mathclap{\eqref{eq:linearConvergence:irrMarking},~\eqref{eq:stability}}}< \mkern18mu
 \Big[ \eta_{L[\ell]} + \Cstab \sum_{k=1}^{\ell-L[\ell]}\enorm{u_{\ell+1-k}^\star - u_{\ell-k}^\star} \Big]
 \, \varrho_{\textup{irr}} \, \zeta_{J[L[\ell]],L(\ell)}
 \\& 
 \eqreff*{eq:stability}\le \,
 \varrho_{\textup{irr}} \, \eta_{L[\ell]} \,\zeta_{J[L[\ell]],L[\ell]} 
 + 
 \Cper \, \sum_{k=1}^{N-1}{\sf r}_{\ell-k}.
 \end{aligned}
\end{equation}

\textbf{Step~4 (goal-wise notational preliminaries).}
Note that the multigoal-estimator product $\Delta_\ell$ sums over the contributions of all dual problems. For the following step, it is important to change to a goal-wise viewpoint. For every goal $G_j$ with $j \in \{1, \dots, N\}$, there exists a unique $m = M[\ell, j] \in \{\ell + N-1, \dots, \ell + 2(N-1)\}$ such that $j = J[m]$, i.e., $m$ is the unique level such that the dual problem with index $j$ is active and hence $\zeta_{j, m} = \zetaa_{m}$. 
Thus, in case of irregular marking, define $\pi[j] \coloneqq J\big[L\big[M[\ell, j]\big]\big]$. The function $\pi$ maps the goal index $j$ to the active goal that is associated with the maximal estimator within the levels $\ell \le M[\ell, j]-(N-1)$ to $M[\ell, j] \le \ell + 2(N-1)$ (which will be helpful for irregular marking). Moreover, we define the pairwise disjoint sets that partition the indices of the goals $1 \le j \le N$ according to the algorithmic decision in the mark module, i.e.,
\begin{equation*}
   \Ireg[\ell] \coloneqq \{ j \in \{1, \dots, N\} | \text{ Algorithm~\ref{algorithm:Ngoal} uses regular marking in step } M[\ell, j] \}
\end{equation*}
and its complement $\Iirr[\ell] \coloneqq \{1, \dots, N\} \setminus \Ireg$ 
\begin{equation*}
   \Iirr[\ell]  
    = \{ j \in \{1, \dots, N\} | \text{ Algorithm~\ref{algorithm:Ngoal} uses irregular marking in step } M[\ell, j] \}.
\end{equation*}
Since $M[\ell, j] \in \{\ell + N-1, \dots, \ell + 2(N-1)\}$, this considers $N$ steps of the algorithm.  

\textbf{Step~5 (perturbed contraction $\boldsymbol{\ell+2N-1 \to \ell}$).}
In this step, we prove the perturbed contraction stated in~\eqref{eq:lemma:contraction:multigoal}.
Every dual problem with index $i = 1, \dots, N$ is active exactly once in the steps $\ell+N-1, \dots, \ell+2(N-1)$. 
In every step, either regular marking (discussed in Step~1) or irregular marking (discussed in Step~2) is applied.
Hence, for any $m \in \{\ell + N-1, \dots, \ell + 2(N-1)\}$, we have that 
\begin{subequations}\label{eq:contraction:multigoal}
 \begin{align}\label{eq:contraction:multigoal:case1}
  \eta_{m+1} \,\zeta_{J[m],m+1} 
  &\eqreff{eq:step1:convergence:multigoal}\leq
   \qest \, \eta_{m} \,\zeta_{J[m],m}
   + 
   \Cper \, {\sf r}_{m}
 \intertext{or}\label{eq:contraction:multigoal:case2}
  \eta_{m} \zeta_{J[m],m} 
  &\eqreff{eq:step2:convergence:multigoal}\leq
   \varrho_{\textup{irr}} \, \eta_{L[m]} \, \zeta_{J[L[m]],L[m]} 
   + 
   \Cper \, \sum_{k=1}^{N-1}{\sf r}_{m-k},
 \end{align}
\end{subequations}
where $L[m] \in \{m-1, \dots, m-N+1\}$ is the level from Step~1 in case of irregular marking.
Note that 
\begin{equation}\label{eq1:lemma:convergence:multigoal:step3}
 \sum_{k=1}^{N-1}{\sf r}_{m-k} 
 \leq 
 \sum_{k=\ell}^{\ell+2(N-1)} \!\! {\sf r}_k 
 \quad \text{for all } m \in \{\ell + N-1, \dots, \ell + 2(N-1)\}.
\end{equation}
We split up the multigoal-estimator product $\Delta_{\ell+2N-1}$ into contributions from regular and irregular marking with respect to the goals, i.e.,
\begin{equation}\label{eq:estimator:decomposition}
 \Delta_{\ell+2N-1} 
 = 
 \sum_{j=1}^N \eta_{\ell+2N-1} \,\zeta_{j,\ell+2N-1}
 = 
\mkern-3mu \sum_{j \in \Ireg[\ell]} \mkern-3mu \eta_{\ell+2N-1} \,\zeta_{j,\ell+2N-1} 
 + 
 \mkern-3mu \sum_{j \in \Iirr[\ell]} \mkern-3mu \eta_{\ell+2N-1} \,\zeta_{j,\ell+2N-1}.
\end{equation}
The regular terms in~\eqref{eq:estimator:decomposition} can be contracted with~\eqref{eq:contraction:multigoal:case1} for the active goals and stability~\eqref{eq:stability} for the inactive goals, yielding
\begin{equation}\label{eq:aux:case:regular}
\begin{aligned}
\sum_{j \in \Ireg[\ell]} \eta_{\ell+2N-1}\, \zeta_{j,\ell+2N-1} 
\quad &\stackrel{\mathclap{\eqref{eq:contraction:multigoal:case1}, \eqref{eq:stability}}}\le \quad
\qest \mkern-6mu\sum_{j \in \Ireg[\ell]} \mkern-6mu \eta_{\ell+N-1} \,\zeta_{j,\ell+N-1} 
+
|\Ireg[\ell]|\, \Cper \mkern-12mu \sum_{k=\ell+N-1}^{\ell+2(N-1)} \mkern-12mu {\sf r}_k 
\\
&\eqreff*{eq:stability}\le \,
\qest \mkern-6mu\sum_{j \in \Ireg[\ell]}\mkern-6mu \eta_{\ell} \,\zeta_{j,\ell}
+ 
|\Ireg[\ell]|\, \Cper \mkern-12mu \sum_{k=\ell}^{\ell+2(N-1)} \mkern-6mu {\sf r}_k.
   \end{aligned}
\end{equation}
The sum with irregular marking in~\eqref{eq:estimator:decomposition} is more involved and needs to be contracted down to level $\ell$ in one go. This is due to the fact that any irregular marking in the levels $\ell +N -1$ to $\ell + 2(N-1)$ compares the current level with the last $N-1$ levels. Thus, in our worst case analysis, one irregular term may be contracted back to level $\ell$. By exploiting~\eqref{eq:contraction:multigoal:case2} and~\eqref{eq:stability} repeatedly to cover the worst case, we deduce that
\begin{equation}\label{eq:aux:case:irregular}
\begin{aligned}
\sum_{j \in \Iirr[\ell]} \eta_{\ell+2N-1} \,\zeta_{j,\ell+2N-1} 
\le 
\varrho_{\textup{irr}} \mkern-6mu \sum_{j \in \Iirr[\ell]}\mkern-6mu \eta_{\ell}\, \zeta_{\pi[j],\ell} 
+ 
2 |\Iirr[\ell]|\, \Cper \mkern-12mu \sum_{k=\ell}^{\ell+2(N-1)} \mkern-12mu {\sf r}_k.
\end{aligned}
\end{equation}
The factor $2$ in front of $|\Iirr[\ell]|$ appears since we use stability~\eqref{eq:stability} for the inactive goals and the estimate~\eqref{eq:contraction:multigoal:case2} for the active goal already carries a sum (and we also use~\eqref{eq1:lemma:convergence:multigoal:step3}).

Overall, by combining~\eqref{eq:estimator:decomposition} with~\eqref{eq:aux:case:regular}--\eqref{eq:aux:case:irregular}, we have derived 
\begin{equation*}
 \Delta_{\ell+2N-1} 
 \le 
 \qest \mkern-6mu \sum_{j \in \Ireg[\ell]}\mkern-6mu  \eta_{\ell}\, \zeta_{j,\ell} 
 + 
 |\Ireg[\ell]|\, \Cper \mkern-12mu \sum_{k=\ell}^{\ell+2(N-1)} \mkern-12mu {\sf r}_k
 + \varrho_{\textup{irr}} \mkern-6mu \sum_{j \in \Iirr[\ell]} \mkern-6mu \eta_{\ell} \, \zeta_{\pi[j],\ell} 
 + 2 |\Iirr[\ell]|\, \Cper \mkern-12mu \sum_{k=\ell}^{\ell+2(N-1)}\mkern-12mu {\sf r}_k.
\end{equation*}
Note that each goal $G_j$ with index $j \in \{1, \dots, N\}$ appears at most once in the sum over $\Ireg[\ell]$ and at most $N-1$ times in the sum over $\Iirr[\ell]$ as $\pi[j] \neq j$.
Hence, we obtain that
\begin{equation*}
 \Delta_{\ell+2N-1} 
 \leq 
 \big[\qest + (N-1) \varrho_{\textup{irr}} \big] \Delta_{\ell}
  +
   2 N \,\Cper \!\!  \sum_{k=\ell}^{\ell+2(N-1)} \!\! {\sf r}_k.
\end{equation*}

\textbf{Step~6 (summability of the remainder term).}
Set $\Rho_{k} \coloneqq 2N \,\Cper \sum_{k'=k}^{k+2(N-1)} {\sf r}_{k'}$. 
Let $n \in \N_0$ be arbitrary. 
We observe that every term ${\sf r}_k$  appears at most $2N-1$ times in the sum $\sum_{k=\ell}^{\ell+n} \Rho_k^2$.
With the discrete Cauchy--Schwarz inequality, we thus obtain that
\begin{equation*}
 \sum_{k=\ell}^{\ell+n} \Rho_k^2 
 \leq 
 (2N)^3\, \Cper^2 \sum_{k=\ell}^{\ell+n}\sum_{k'=k}^{k+2(N-1)} {\sf r}_{k'}^2
  \leq 
  (2N)^4 \,\Cper^2 \mkern-12mu \sum_{k=\ell}^{(\ell+n)+2(N-1)} \mkern-12mu  {\sf r}_k^2.
\end{equation*}
The discrete Cauchy--Schwarz inequality also yields
\begin{equation*}
 {\sf r}_k^2
  \leq 
  2N \Big( \eta_{k}^2\, \sum_{i=1}^N \enorm{z_{i,k+1}^{\star} - z_{i,k}^{\star}}^2 
  + 
  \enorm{u_{k+1}^{\star} - u_k^{\star}}^2\, \sum_{i=1}^N \zeta_{i,k}^2 \Big).
\end{equation*}
Using quasi-monotonicity~\eqref{axiom:qm} and $K \coloneqq \ell+n+2(N-1)$, we derive that 
\begin{equation*}
 \sum_{k=\ell}^{\ell+n} \Rho_k^2 
 \leq 
 (2N)^5 \, \Cper^2\, \Cmon^2 \Big[ \eta_{\ell}^2 \,\sum_{i=1}^N \sum_{k=\ell}^{K}\enorm{z_{i,k+1}^{\star} - z_{i,k}^{\star}}^2 
 + 
 \Big(\sum_{i=1}^{N} \zeta_{i,\ell}^2\Big) \,\sum_{k=\ell}^{K} \enorm{u_{k+1}^{\star}- u_k^{\star}}^2\Big].
\end{equation*}
Recall that $\enorm{v}^2 = a(v,v)$ and that the variational formulations~\eqref{eq:weak_formulation} and~\eqref{eq:discrete} together with $\XX_{\ell} \subseteq \XX_{\ell+1}$ imply the Galerkin orthogonality
\begin{equation*}
   a(u^{\star} - u_{\ell+1}^{\star}, u_{\ell+1}^\star-u_\ell^\star) 
   = 0 
   \quad \text{ for all } \ell \in \N_0.
\end{equation*}
The resulting Pythagorean identity
\begin{equation*}
 \enorm{u_{k+1}^{\star} - u_k^{\star}}^2 
 = 
 \enorm{u^{\star} - u_k^{\star}}^2 
 - 
 \enorm{u^{\star} - u_{k+1}^{\star}}^2
\end{equation*}
thus yields the telescoping sum
\begin{equation*}
 \sum_{k=\ell}^{K} \enorm{u_{k+1}^{\star} - u_k^{\star}}^2 
 = 
 \enorm{u^{\star} - u_{\ell}^{\star}}^2 
 - 
 \enorm{u^{\star} - u_{K+1}^{\star}}^2 
 \leq 
  \enorm{u^{\star} - u_{\ell}^{\star}} 
  \leq 
  \Crel^2\, \eta_{\ell}^2.   
\end{equation*}
The same argument applies to the dual problems.
Overall, we thus derive that
\begin{equation*}
 \sum_{k=\ell}^{\ell+n} \Rho_k^2 
 \leq
  2^6 \, N^5 \,\Cper^2\, \Cmon^2 \,\Crel^2  \, \eta_{\ell}^2 \sum_{i=1}^N \zeta_{i,\ell}^2
  \eqqcolon 
  \Csum \, \eta_{\ell}^2 \,\sum_{i=1}^N \zeta_{i,\ell}^2 
   \leq
    \Csum \, \Delta_\ell^2.
\end{equation*}
Since $\Csum$ is independent of $n$, this concludes the proof of~\eqref{eq:lemma:contraction:multigoal}.
\end{proof}

\begin{remark}
To reduce the $N$-dependence in the estimate, one might consider the $\ell_2$-sum of the goal errors $\sum_{j=1}^{N}|G_j(u^{\star})- G_j(u_{\ell}^{\star})|^2$ with $\widetilde{\Delta}_{\ell} \coloneqq \eta^2_{\ell}\, \sum_{i=1}^N \zeta^2_{i,\ell}$. 
In this case, stability~\eqref{eq:stability} and reduction~\eqref{eq:reduction} are employed using the Young inequality such that
\begin{equation*}
   \mu_h(v_h)^2 \leq 
   (1+\delta) \, \mu_H(v_h)^2 + (1+\delta^{-1}) \, \Cstab^2 \, \enorm{v_h - v_H}^2
\end{equation*}
as well as
\begin{equation*}
   \mu_h(v_h)^2 \leq 
  (1+\delta) \, \qest^2 \, \mu_H(v_H)^2 + (1+\delta^{-1}) \, \Cstab^2 \, \enorm{v_h - v_H}^2.
\end{equation*}
Define $\widetilde{C}_{\textnormal{stab}} \coloneqq (1+\delta^{-1}) \, \Cstab^2$ and $\widetilde{C}_{\textnormal{per}} \coloneqq 2 \widetilde{C}_{\textnormal{stab}} + (1+\Cmon)^2 \, \widetilde{C}_{\textnormal{stab}}^2 \, \Crel^2$. 
Choosing $0<\delta$ sufficiently small such that $(1+ \delta) \, \qest^2 < 1$ and $(1+\delta)^N < 2$, the perturbed contraction reads
\begin{equation*}
 \widetilde{\Delta}_{\ell+2N-1} 
 \leq
 \widetilde{q}_{\textnormal{ctr}} \, \widetilde{\Delta}_{\ell} 
 + \widetilde{\Rho}_{\ell}
 \quad \text{with} \quad
 \sum_{k=\ell}^\infty \widetilde{\Rho}_k 
 \leq \widetilde{C}_{\textnormal{sum}} \, \widetilde{\Delta}_{\ell},
\end{equation*}
where $\widetilde{q}_{\textnormal{ctr}} \coloneqq (1+\delta) \, \qest^2 + (N-1) (1+\delta)^{N-1} \, \varrho_{\textup{irr}}^2$ and $\widetilde{C}_{\textnormal{sum}} \coloneqq 2^{3} \, N^2 \, \widetilde{C}_{\textnormal{per}}^2 \, \Cmon^2 \, \Crel^2$ and a suitable adaptation of the remainder $\widetilde{\Rho}_\ell$.
\end{remark}

\begin{proof}[\bfseries Proof of Theorem~\ref{theorem:multigoal:linearConvergence}]
Together with~\eqref{eq:lemma:contraction:multigoal}, the Young inequality yields
\begin{equation}\label{eq:theorem:linconv:contraction}
 \Delta_{\ell+2N-1}^2
  \le 
  (1+\delta) \,\qctr \, \Delta_\ell^2 
  + 
  (1 + \delta^{-1}) \, \Rho_\ell^2
 \quad \text{ for all } \ell \in \N_0 \text{ and all } 0 <\delta.
\end{equation}
We choose $0 <\delta$ sufficiently small such that $q \coloneqq (1+\delta)\, \qctr < 1$.
We aim to verify the summability criterion from~\cite[Lemma~2]{bfmps2025} for $a_\ell \coloneqq \Delta_\ell$. To this end, let $\ell, n \in \N_0$.
Together with quasi-monotonicity~\eqref{axiom:qm}, the last formula~\eqref{eq:theorem:linconv:contraction} leads to
\begin{equation*}
   \begin{aligned}
 \sum_{k = \ell}^{\ell + n} \Delta_k^2 
 \mkern5mu&  \eqreff*{axiom:qm}\le \mkern5mu
 [1 + 2(N-1)\,\Cmon^{4}] \, \Delta_\ell^2 
 + 
 \mkern-12mu \sum_{k = \ell+2N-1}^{\ell + n}\mkern-12mu  \Delta_k^2
 = 
  [1 + 2(N-1)\,\Cmon^{4}] \, \Delta_\ell^2 
 + 
\mkern-18mu \sum_{k = \ell}^{\ell + n-(2N-1)} \mkern-18mu \Delta_{k+2N-1}^2 
\\&  
\eqreff*{eq:theorem:linconv:contraction}\leq
 [1 + 2(N-1)\,\Cmon^{4}] \, \Delta_\ell^2 
  +
   \sum_{k=\ell}^{\ell + n - (2N-1)} \Big( q \, \Delta_k^2 + (1+\delta^{-1}) \, \Rho_k^2 \Big).
   \end{aligned}
\end{equation*}
Rearranging this estimate, we see that
\begin{equation*}
   \begin{aligned}
 \sum_{j = \ell}^{\ell + n} \Delta_j^2
 & \; \le\; \frac{1}{1-q} \, \Big( [1 + 2(N-1)\,\Cmon^{4}] \, \Delta_\ell^2 + (1+\delta^{-1}) \, \sum_{k = \ell}^{\ell + n - (2N-1)} \Rho_k^2 \Big)
 \\& \;
 \eqreff*{eq:lemma:contraction:multigoal}\le \;
 \frac{(1 + 2(N-1)\,\Cmon^4) + (1+\delta^{-1})\,\Csum}{1-q} \, \Delta_\ell^2 
 \eqqcolon
  \Ctail\, \Delta_{\ell}^2.
\end{aligned}
\end{equation*}
Therefore,~\cite[Lemma~2]{bfmps2025} concludes the proof of R-linear convergence~\eqref{eq:multigoal:linearConvergence} with $\Clin^2 \coloneqq 1 + \Ctail$ and $\qlin^2 \coloneqq (1 + \Ctail)/(2 + \Ctail)$ (as the proof in~\cite{bfmps2025} reveals).
\end{proof}

\section{Optimal convergence rates}\label{section:ratesN}
Optimal convergences rates are captured by means of nonlinear approximation classes as introduced in~\cite{bdd2004} in the context of adaptive FEM. The main result of this paper (Theorem~\ref{theorem:multigoal:optimalRates} below) connects the decay of the multigoal-estimator product $\Delta_\ell$ defined in~\eqref{eq:quasi-error:multigoal} to the best possible rates that can be achieved by a theoretical sequence of optimal meshes. To this end, for $n \in \N_0$, we write $\TT_{H} \in \T_n$ if $\TT_{H}$ is a refinement of $\TT_0$ with at most $n$ more elements, i.e., $\TT_{H} \in \T$ and $\# \TT_{H} - \# \TT_0 \le n$. We say that $w^\star \in \{u^\star, z_1^\star, \ldots, z_N^\star\}$ is in the nonlinear approximation class of rate $0<r$, if
\begin{equation}\label{eq:approximationClass}
  \norm{w^\star}_{\mathbb{A}_r}
  \coloneqq
  \sup_{n \in \N_0} \big( (n+1)^r \, \min_{\TT_{\textup{opt}} \in \T_n} \mu_{\textup{opt}}(w^\star) \big)
  < \infty,
\end{equation}
where $\mu_{\textup{opt}}(w^\star)$ is the corresponding error estimator from~\eqref{eq:estimators} associated with the optimal triangulation $\TT_{\textup{opt}} \in \T_n$ and in accordance with $w^\star$. While our definition of $\norm{w^\star}_{\mathbb{A}_r}$ follows~\cite{axioms} and relies on the estimator only, we note that $\norm{w^\star}_{\mathbb{A}_r}$ can indeed be characterized in terms of error plus data oscillations as in~\cite{ckns2008}. We refer, e.g., to~\cite{axioms} for details.

The following main theorem gives meaningful results for the case where the primal and dual problems can be approximated with certain rates $0< s,t$ in the sense that 
\[
\norm{u^\star}_{\mathbb{A}_s} +  \sum_{i=1}^N \norm{z_i^\star}_{\mathbb{A}_t}  < \infty,
\]
i.e., there are (potentially different) mesh sequences such that the primal estimator $\eta$ decays (at least) with rate $s$ and all dual estimators $\zeta_{j}$ decay (at least) with rate $t$. According to the following theorem, the proposed Algorithm~\ref{algorithm:Ngoal} then guarantees that the multigoal-estimator product $\Delta_\ell$ from~\eqref{eq:quasi-error:multigoal} decays with rate $s+t$.

\begin{theorem}[optimal convergence rates of NGO-AFEM]\label{theorem:multigoal:optimalRates}
  Suppose stability~\eqref{axiom:stability}, reduction~\eqref{axiom:reduction}, discrete reliability~\eqref{axiom:discrete_reliability}, and quasi-monotonicity~\eqref{axiom:qm}.
  Define $\theta_\star \coloneqq 1/(1 + \Cstab^2\Cdrel^2)$ and
  \begin{equation}\label{eq:lemma:marking:multigoal:constant}
    C_{0} 
    \coloneqq
     \max_{k=0, \dots, N-2} \# \MM_{k} \, \Delta_{k}^{1/(s+t)}.
  \end{equation}
  Let $0 < \theta < \theta_\star$ and $0 < \varrho_{\textup{irr}} < (1- \qest)/(N-1)$, where $0< \qest = [1-(1-\qred^2)\,\theta]^{1/2} <1$ as in Lemma~\ref{lemma:stability+reduction}.
  Then, Algorithm~\ref{algorithm:Ngoal} guarantees, for all rates $0 <s, t$, that
  \begin{equation}\label{eq:multigoal:thm:optimal}
    \sup_{\ell \in \N_0}(\#\TT_\ell - \#\TT_0 + 1)^{s+t} \Delta_{\ell}
    \le C_{\rm opt} \, \max \Big\{C_0,\max_{j=1,\dots,N} \big[ \norm{u^\star}_{\mathbb{A}_s} \norm{z_j^\star}_{\mathbb{A}_t} \big]^{1/(s+t)} \Big\},
  \end{equation}
  where $C_{\rm opt} > 0$ depends only on the constants in~\eqref{axiom:stability},~\eqref{axiom:reduction},~\eqref{axiom:discrete_reliability},~\eqref{axiom:qm}, and on $\Cmark$, $\theta$, $\varrho_{\textup{irr}}$, $s$, $t$, $N$, the initial mesh $\TT_0$, and the mesh-closure constant.
\end{theorem}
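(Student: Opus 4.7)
The plan follows the by-now classical template for optimality of adaptive FEM (cf.~\cite{axioms, fpz2016, bfmps2025}), combining three ingredients: the NVB mesh-closure estimate $\#\TT_\ell - \#\TT_0 \lesssim \sum_{k=0}^{\ell-1}\#\MM_k$ from~\cite{bdd2004, s2008}, the R-linear convergence of $\Delta_\ell$ established in Theorem~\ref{theorem:multigoal:linearConvergence}, and the per-level cardinality estimate
\[
 \#\MM_\ell \lesssim \max\Big\{C_0,\,\max_{j=1,\ldots,N}\bigl[\norm{u^\star}_{\mathbb{A}_s}\norm{z_j^\star}_{\mathbb{A}_t}\bigr]^{1/(s+t)}\Big\}\,\Delta_\ell^{-1/(s+t)},
\]
which is the announced Lemma~\ref{lemma:marking} and forms the heart of the argument.

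To prove this cardinality estimate at a level $\ell\ge N-1$ at which \emph{regular} marking is invoked with active dual $j = J[\ell]$, I would adapt the argument of~\cite{fpz2016}. For parameters $\varepsilon_u,\varepsilon_z>0$, select optimal meshes $\TT_{u,\varepsilon_u},\TT_{j,\varepsilon_z}\in\T$ whose cardinalities are controlled by $\norm{u^\star}_{\mathbb{A}_s}^{1/s}\varepsilon_u^{-1/s}$ resp.\ $\norm{z_j^\star}_{\mathbb{A}_t}^{1/t}\varepsilon_z^{-1/t}$ and whose corresponding estimators are bounded by $\varepsilon_u$ resp.\ $\varepsilon_z$. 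Forming the overlay with $\TT_\ell$ and combining discrete reliability~\eqref{axiom:discrete_reliability}, stability~\eqref{axiom:stability}, and the assumption $\theta<\theta_\star=1/(1+\Cstab^2\Cdrel^2)$ shows that the refined elements satisfy the D\"orfler criterion for either $\eta_\ell$ or $\zeta_{j,\ell}$. Quasi-minimality of $\MM_\ell^u,\MM_\ell^z$ together with the selection of $\MM_\ell^{\rm min}$ in step~\eqref{algorithm:mark:fpz} then yields
\[
 \#\MM_\ell \;\lesssim\; \min\{\#\MM_\ell^u,\#\MM_\ell^z\} \;\lesssim\; \norm{u^\star}_{\mathbb{A}_s}^{1/s}\varepsilon_u^{-1/s} + \norm{z_j^\star}_{\mathbb{A}_t}^{1/t}\varepsilon_z^{-1/t}.
\]
A Young-type optimization in $\varepsilon_u,\varepsilon_z$ under the coupling $\varepsilon_u\varepsilon_z\simeq \eta_\ell\zeta_{j,\ell}$ produces the combined rate $s+t$. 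Finally, the regular-marking test in~\eqref{algorithm:mark} together with the cycling of $J[\ell]$ (every goal is active once per $N$ steps) and quasi-monotonicity~\eqref{axiom:qm} forces $\eta_\ell\zeta_{j,\ell}\gtrsim \Delta_\ell/N$ (with constants depending on $\varrho_{\rm irr},\Cmon,N$), casting the estimate in the announced form.

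At \emph{irregular} levels, the algorithmic constraint $\#\MM_\ell\le \#\MM_{\ell-1}$ propagates the bound backward in the step index: letting $\ell^\star\le \ell$ denote the most recent regular level (or a start-up level absorbed into $C_0$), iteration yields $\#\MM_\ell\le \#\MM_{\ell^\star}$; combining with R-linear convergence from Theorem~\ref{theorem:multigoal:linearConvergence}, which gives $\Delta_{\ell^\star}^{-1/(s+t)}\lesssim \Delta_\ell^{-1/(s+t)}$, transfers the regular-marking bound to level $\ell$. The first $N-1$ start-up levels, before the active-dual cycle has covered all goals, are absorbed directly into $C_0$ of~\eqref{eq:lemma:marking:multigoal:constant}. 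This construction is the main obstacle of the proof, since it requires careful simultaneous bookkeeping of the active-goal cycle, the backward transfer of cardinalities through irregular runs, and the quasi-monotonicity estimates; the threshold $\varrho_{\rm irr}<(1-\qest)/(N-1)$—matching the linear-convergence threshold in Theorem~\ref{theorem:multigoal:linearConvergence}—is exactly what is needed to keep all constants uniformly under control.

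Finally, inserting the cardinality estimate into the mesh-closure bound gives $\#\TT_\ell - \#\TT_0 \lesssim \mathcal{K} \sum_{k=0}^{\ell-1} \Delta_k^{-1/(s+t)}$, where $\mathcal{K}$ denotes the maximum on the right-hand side of~\eqref{eq:multigoal:thm:optimal} (without $C_{\rm opt}$). Using R-linear convergence in the reciprocal form $\Delta_k^{-1/(s+t)}\lesssim \qlin^{(\ell-k)/(s+t)}\Delta_\ell^{-1/(s+t)}$ derived from Theorem~\ref{theorem:multigoal:linearConvergence}, the resulting geometric series sums to a constant independent of $\ell$, and one arrives at $(\#\TT_\ell-\#\TT_0+1)\Delta_\ell^{1/(s+t)}\lesssim \mathcal{K}$, which is equivalent to~\eqref{eq:multigoal:thm:optimal}.
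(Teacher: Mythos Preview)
Your proposal is correct and follows essentially the same route as the paper: the three-ingredient template (mesh closure, R-linear convergence, per-level cardinality bound) is exactly the paper's proof of the theorem, and your sketch of Lemma~\ref{lemma:marking}---regular levels via overlay/comparison with the active dual together with the observation that the regular-marking test plus quasi-monotonicity forces $\eta_\ell\zeta_{j,\ell}\gtrsim\Delta_\ell$, irregular levels via backward transfer $\#\MM_\ell\le\#\MM_{\ell^\star}$ to the last regular level, and the start-up levels $\ell\le N-2$ absorbed into $C_0$---matches the paper's Steps~1--4. The only cosmetic differences are that the paper packages your overlay-plus-Young argument via the comparison lemma of~\cite{ffghp2016}, and in Step~4 it uses quasi-monotonicity $\Delta_\ell\le\Cmon^2\Delta_{\ell^\star}$ rather than R-linear convergence (noting the latter as an alternative); in particular, the threshold on $\varrho_{\rm irr}$ enters only through Theorem~\ref{theorem:multigoal:linearConvergence} in the final geometric-series step, not through the cardinality lemma itself.
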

This statement can be interpreted as follows. If rate $0 < s$ is theoretically possible for the primal problem, i.e., $\norm{u^\star}_{\mathbb{A}_s} < \infty$, as well as rate $0<t$ is possible for all dual problems, i.e., $\sum_{i=1}^N \norm{z_i^\star}_{\mathbb{A}_t} < \infty$, then Algorithm~\ref{algorithm:Ngoal} drives down the multigoal-estimator product $\Delta_\ell$ from~\eqref{eq:quasi-error:multigoal} with rate $0 < s+t$. More formally, there holds $\Delta_\ell \lesssim (\dim \XX_\ell)^{-(s+t)}$ with $\dim \XX_\ell$ being the number of degrees of freedom of the FE discretization.   

The mathematical core of Theorem~\ref{theorem:multigoal:optimalRates} is the following lemma, which controls the number of marked elements in each step of the adaptive algorithm.
\begin{lemma}[control over the number of marked elements]\label{lemma:marking}
Suppose stability~\eqref{axiom:stability} and discrete reliability~\eqref{axiom:discrete_reliability}, and let $0 < \theta < \theta_\star = 1/(1 + \Cstab^2\Cdrel^2)$ as in Theorem~\ref{theorem:multigoal:optimalRates}.
Then, there exists a constant $C_1 > 0$ such that, for all $0 < s, t$ and all $\ell \in \N_0$, the sets of marked elements $\MM_\ell$ of Algorithm~\ref{algorithm:Ngoal} satisfy with $C_0$ from~\eqref{eq:lemma:marking:multigoal:constant} that
\begin{equation}\label{eq:lemma:marking}
 \#\MM_\ell 
 \le 
C_1 \, \max \Big\{ C_{0}, \big[ \norm{u^\star}_{\mathbb{A}_s}\, \norm{z_j^\star}_{\mathbb{A}_t} \big]^{1/(s+t)} \Big\}  \,
 \Delta_{\ell}^{-1/(s+t)}, \quad \text{ where } j= {\rm mod}(\ell,N) + 1.
\end{equation}
The constant $C_1$ depends only on $\Cmark$, $\Cmon$, $q_\star$, $\varrho_{\textup{irr}}$, $s$, $t$, and $N$.
\end{lemma}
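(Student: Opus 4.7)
My plan is to combine three ingredients: (i)~the single-goal optimality-of-marking argument of Feischl--Praetorius--van der Zee~\cite{fpz2016} applied to the pair $(\eta_\ell,\zeta_{j,\ell})$ at the current level, (ii)~a multiplicative comparison $\Delta_\ell \le C_r\,\eta_\ell\,\zeta_{j,\ell}$ that the regular marking condition enforces, and (iii)~a back-tracing argument that handles irregular markings via the built-in inequality $\#\MM_\ell \le \#\MM_{\ell-1}$ from Algorithm~\ref{algorithm:Ngoal}\eqref{algorithm:mark}. The base case $\ell \le N-2$ follows verbatim from the definition of $C_0$ in~\eqref{eq:lemma:marking:multigoal:constant}; the inductive step for $\ell \ge N-1$ then branches on whether marking at step $\ell$ is regular or irregular.

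In the regular case, one has $\MM_\ell = \MM_\ell^{uz}$, and the Feischl--Praetorius--van der Zee optimality (relying on stability~\eqref{axiom:stability}, discrete reliability~\eqref{axiom:discrete_reliability}, and $\theta < \theta_\star$) gives
\[
 \#\MM_\ell \lesssim \bigl[\norm{u^\star}_{\mathbb{A}_s}\,\norm{z_j^\star}_{\mathbb{A}_t}\bigr]^{1/(s+t)}\,(\eta_\ell\,\zeta_{j,\ell})^{-1/(s+t)},
\]
where $j = \mathrm{mod}(\ell,N)+1$. To convert $(\eta_\ell\,\zeta_{j,\ell})^{-1/(s+t)}$ into the required $\Delta_\ell^{-1/(s+t)}$, I exploit the cycling of active indices: each $i \ne j$ has been active at a unique earlier level $\ell - k(i) \in \{\ell-N+1,\dots,\ell-1\}$, so quasi-monotonicity~\eqref{axiom:qm} together with the regular-marking hypothesis $\zetaa_{\ell-k(i)} \le \zeta_{j,\ell}/\varrho_{\textup{irr}}$ yields $\zeta_{i,\ell} \le \Cmon\,\zetaa_{\ell-k(i)} \le (\Cmon/\varrho_{\textup{irr}})\,\zeta_{j,\ell}$. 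Summing produces $\Delta_\ell \le \bigl[1+(N-1)\Cmon/\varrho_{\textup{irr}}\bigr]\,\eta_\ell\,\zeta_{j,\ell}$, closing the regular case with $C_r = 1+(N-1)\Cmon/\varrho_{\textup{irr}}$.

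In the irregular case, the inequality $\#\MM_\ell \le \#\MM_{\ell-1}$ is iterated back to the largest $\ell^\star \le \ell$ at which marking is either a base case ($\ell^\star \le N-2$) or regular; such $\ell^\star$ always exists because the convention $\zetaa_{-k} = 0$ makes the regular condition trivially true at $\ell = 0$. The preceding analysis (or the base case) bounds $\#\MM_{\ell^\star}$ by $\max\{C_0,[\norm{u^\star}_{\mathbb{A}_s}\,\norm{z_{j^\star}^\star}_{\mathbb{A}_t}]^{1/(s+t)}\}\,\Delta_{\ell^\star}^{-1/(s+t)}$ with $j^\star = \mathrm{mod}(\ell^\star,N)+1$, and the R-linear convergence of Theorem~\ref{theorem:multigoal:linearConvergence} (whose hypothesis $\qest + (N-1)\varrho_{\textup{irr}} < 1$ is exactly the condition assumed here) yields $\Delta_\ell \le \Clin\,\qlin^{\ell-\ell^\star}\,\Delta_{\ell^\star} \le \Clin\,\Delta_{\ell^\star}$, and therefore $\Delta_{\ell^\star}^{-1/(s+t)} \lesssim \Delta_\ell^{-1/(s+t)}$ uniformly in $\ell - \ell^\star$.

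The main obstacle is the mismatch between the target active index $j = \mathrm{mod}(\ell,N)+1$ in~\eqref{eq:lemma:marking} and the back-traced index $j^\star$ that surfaces naturally in the irregular branch. I resolve this by absorbing $\norm{z_{j^\star}^\star}_{\mathbb{A}_t}$ into $\max_{j'=1,\dots,N}\norm{z_{j'}^\star}_{\mathbb{A}_t}$; since this maximum is precisely the quantity on the right-hand side of Theorem~\ref{theorem:multigoal:optimalRates}, the mild weakening of~\eqref{eq:lemma:marking} is fully compatible with the downstream optimality proof. A subsidiary check is that R-linear convergence is genuinely available, which demands stability, reduction, reliability, and quasi-monotonicity: all four follow from the lemma's hypotheses once reliability is deduced from discrete reliability via $\Crel \le \Cdrel$ (see after Lemma~\ref{lemma:axioms}).
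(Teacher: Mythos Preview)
Your proposal is correct and mirrors the paper's four-step proof almost exactly: base case via $C_0$, the comparison-lemma/D\"orfler-optimality bound on $\#\MM_\ell^{uz}$, the regular-marking inequality $\Delta_\ell \le \bigl[1+(N-1)\Cmon/\varrho_{\textup{irr}}\bigr]\,\eta_\ell\,\zeta_{j,\ell}$, and cardinality back-tracing for irregular marking (where the paper uses quasi-monotonicity $\Delta_\ell \le \Cmon^2\,\Delta_{\ell'}$ rather than R-linear convergence, but explicitly notes that either suffices). Your flagging of the $j$ vs.\ $j^\star$ mismatch is in fact a clarification the paper glosses over---its Step~4 silently writes $\norm{z_j^\star}_{\mathbb{A}_t}$ where the back-traced active index would be correct, so your resolution via $\max_{j'}$ is the honest reading and is exactly what the downstream theorem uses.
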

The proof of Lemma~\ref{lemma:marking} on the control  the
number of marked elements in Algorithm~\ref{algorithm:Ngoal} requires the following two auxiliary results from the literature. 

\begin{lemma}[{optimality of D\"orfler marking~\cite[Proposition~4.12]{axioms}}]\label{lemma:doerfler}
  Suppose stability~\eqref{axiom:stability} and discrete reliability~\eqref{axiom:discrete_reliability}, and
  recall $0 < \theta_\star < 1$ from Theorem~\ref{theorem:multigoal:optimalRates}.
  Then, for all $0 < \theta < \theta_\star$, there exists $0 < q_\star < 1$ such that for all $\TT_H \in \T$
  and all $\TT_h \in \T(\TT_H)$, there holds the following implication for any of the estimators $\mu_H \in
  \{\eta_H, \zeta_{1,H}, \ldots, \zeta_{N,H}\}$:
  \begin{equation}\label{eq:lemma:doerfler}
    \mu_h \le q_\star \mu_H
    \quad \Longrightarrow \quad
    \theta \, \mu_H^2 \le \mu_H(\TT_H \backslash \TT_h)^2.
    \qquad \qed
  \end{equation}
\end{lemma}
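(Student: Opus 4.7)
The plan is to follow the standard ``optimality of Dörfler marking'' argument that originates in Cascón--Kreuzer--Nochetto--Siebert and is formulated abstractly in~\cite{axioms}. The starting point is the orthogonal decomposition of the global estimator into contributions from refined and non-refined elements,
\[
\mu_H^2 = \mu_H(\TT_H \cap \TT_h)^2 + \mu_H(\TT_H \setminus \TT_h)^2.
\]
The second summand on the right-hand side is already the quantity we want to bound from below, so the task reduces to controlling the ``non-refined'' part $\mu_H(\TT_H \cap \TT_h)^2$ by a small multiple of $\mu_H^2$ plus a controlled multiple of $\mu_H(\TT_H \setminus \TT_h)^2$.

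Let $v_H^\star, v_h^\star$ denote the Galerkin solutions corresponding to $\mu_H, \mu_h$ respectively (that is, $u_H^\star, u_h^\star$ for $\mu = \eta$, or $z_{j,H}^\star, z_{j,h}^\star$ for $\mu = \zeta_j$). First I would apply stability~\eqref{axiom:stability} on the admissible subset $\UU_H = \TT_H \cap \TT_h$ together with the Young inequality $(a+b)^2 \le (1+\delta) a^2 + (1+\delta^{-1}) b^2$, for a parameter $\delta > 0$ to be fixed later, to obtain
\[
\mu_H(\TT_H \cap \TT_h)^2 \le (1+\delta)\,\mu_h(\TT_H \cap \TT_h)^2 + (1+\delta^{-1})\,\Cstab^2\,\enorm{v_h^\star - v_H^\star}^2.
\]
Estimating the first term by the full estimator, $\mu_h(\TT_H \cap \TT_h)^2 \le \mu_h^2$, then invoking discrete reliability~\eqref{axiom:discrete_reliability} to get $\enorm{v_h^\star - v_H^\star}^2 \le \Cdrel^2\,\mu_H(\TT_H \setminus \TT_h)^2$, and finally inserting the hypothesis $\mu_h \le q_\star\,\mu_H$ back into the decomposition, one arrives after rearrangement at the key inequality
\[
\bigl[1 - (1+\delta)\,q_\star^2\bigr]\,\mu_H^2 \le \bigl[1 + (1+\delta^{-1})\,\Cstab^2\,\Cdrel^2\bigr]\,\mu_H(\TT_H \setminus \TT_h)^2.
\]

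It remains to choose the constants so that the resulting ratio exceeds $\theta$. The crucial observation is that, as $q_\star \to 0$ and $\delta \to \infty$ (with $\delta\,q_\star^2 \to 0$), the ratio of the two brackets tends exactly to $\theta_\star = 1/(1 + \Cstab^2\,\Cdrel^2)$. Hence, given any $0 < \theta < \theta_\star$, one first picks $\delta > 0$ large enough so that $1/[1 + (1+\delta^{-1})\,\Cstab^2\,\Cdrel^2]$ lies strictly above $\theta$, and then picks $q_\star \in (0,1)$ small enough that $[1 - (1+\delta)\,q_\star^2]/[1 + (1+\delta^{-1})\,\Cstab^2\,\Cdrel^2] \ge \theta$ still holds. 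This two-parameter balancing is the only delicate point of the argument; everything else is uniform over the family $\mu_H \in \{\eta_H, \zeta_{1,H}, \ldots, \zeta_{N,H}\}$ since~\eqref{axiom:stability} and~\eqref{axiom:discrete_reliability} are satisfied for each member with the same constants $\Cstab, \Cdrel$, so the resulting $q_\star$ can be chosen independently of the particular estimator.
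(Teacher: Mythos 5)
The paper does not prove Lemma~\ref{lemma:doerfler} itself but cites it with a \(\square\) from~\cite[Proposition~4.12]{axioms}; your reconstruction is correct and reproduces precisely the standard argument of that reference (decompose \(\mu_H^2\) over refined and non-refined elements, apply stability with Young's inequality and discrete reliability to the non-refined part, insert the contraction hypothesis, and tune \(\delta\) and \(q_\star\) so that the resulting ratio exceeds any prescribed \(\theta<\theta_\star=1/(1+\Cstab^2\Cdrel^2)\)). The explicit remark that the same \(q_\star\) works uniformly over \(\{\eta_H,\zeta_{1,H},\dots,\zeta_{N,H}\}\) because the axiom constants are shared is the right thing to point out in the multigoal setting.
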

The definition of the approximation classes~\eqref{eq:approximationClass} and the validity of the overlay estimate for newest-vertex bisection from~\cite{s2007, ckns2008} enable the following lemma.

\begin{lemma}[{comparison lemma~\cite[Lemma~14]{ffghp2016}}]\label{lemma:comparison}
  Given any $0 < q < 1$, each mesh $\TT_H \in \T$ admits a
  refinement $\TT_h \in \T(\TT_H)$ such that for all $0 < s, t$ and all $j = 1, \dots, N$, it holds that
  \begin{equation}
    \begin{aligned}
     \eta_h\, \zeta_{j,h}
    & \le q \, \eta_H \,\zeta_{j,H} \quad \text { and }
    \\
    \#\TT_h - \#\TT_H
    & \le
    2 \,\big[\Ccomp \, q^{-1/4} \,\norm{u^\star}_{\mathbb{A}_s}\, \norm{z_j^\star}_{\mathbb{A}_t}\big]^{1/(s+t)} \,
    \big[ \eta_H \,\zeta_{j,H} \big]^{-1/(s+t)}. \hfill \qed
    \end{aligned}
  \end{equation}
\end{lemma}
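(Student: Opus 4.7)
The plan is to realize $\TT_h$ as a common NVB refinement (overlay) of $\TT_H$ with two auxiliary meshes that are quasi-optimal for the primal solution $u^\star$ and for the $j$-th dual solution $z_j^\star$, respectively. Quasi-monotonicity~\eqref{axiom:qm} will then transfer the small estimators on the auxiliary meshes to the overlay, while the NVB overlay estimate (see~\cite{s2007, ckns2008}) bounds the number of added elements. The product-form reduction in the estimator and the product-form bound on the cardinality both emerge from a final one-dimensional balancing argument.

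First, I would fix target tolerances $\epsilon_u, \epsilon_z > 0$ subject to $\Cmon^2 \, \epsilon_u \, \epsilon_z \le q \, \eta_H \, \zeta_{j,H}$; their concrete values are chosen at the end. Assuming that $\norm{u^\star}_{\mathbb{A}_s}$ and $\norm{z_j^\star}_{\mathbb{A}_t}$ are both finite (otherwise the claim is vacuous), the very definition~\eqref{eq:approximationClass} of the approximation class yields a triangulation $\TT_u \in \T$ with $\#\TT_u - \#\TT_0 \le (\norm{u^\star}_{\mathbb{A}_s}/\epsilon_u)^{1/s}$ and $\eta_{\TT_u}(u_{\TT_u}^\star) \le \epsilon_u$. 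Analogously, there exists $\TT_z \in \T$ with $\#\TT_z - \#\TT_0 \le (\norm{z_j^\star}_{\mathbb{A}_t}/\epsilon_z)^{1/t}$ and $\zeta_{j,\TT_z}(z_{j,\TT_z}^\star) \le \epsilon_z$.

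Next, set $\TT_h \coloneqq \TT_H \oplus \TT_u \oplus \TT_z$, the coarsest common NVB refinement, which is well-defined since all three meshes refine $\TT_0$. The NVB overlay estimate yields $\#\TT_h - \#\TT_H \le (\#\TT_u - \#\TT_0) + (\#\TT_z - \#\TT_0)$. Since $\TT_h$ refines both $\TT_u$ and $\TT_z$, quasi-monotonicity~\eqref{axiom:qm} gives $\eta_h \le \Cmon \, \epsilon_u$ and $\zeta_{j,h} \le \Cmon \, \epsilon_z$, so the imposed constraint on $\epsilon_u \epsilon_z$ guarantees the product reduction $\eta_h \, \zeta_{j,h} \le \Cmon^2 \, \epsilon_u \, \epsilon_z \le q \, \eta_H \, \zeta_{j,H}$.

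The remaining — and main — task is to minimize $(\norm{u^\star}_{\mathbb{A}_s}/\epsilon_u)^{1/s} + (\norm{z_j^\star}_{\mathbb{A}_t}/\epsilon_z)^{1/t}$ subject to the product constraint $\Cmon^2 \, \epsilon_u \, \epsilon_z = q \, \eta_H \, \zeta_{j,H}$. This is a convex one-dimensional optimization whose minimizer can be identified via Lagrange multipliers (equivalently, via the weighted AM--GM inequality) and is attained at a balanced pair where the two summands are of comparable size. A short computation shows that the minimum is bounded above by $C_{s,t} \, [\norm{u^\star}_{\mathbb{A}_s} \, \norm{z_j^\star}_{\mathbb{A}_t}]^{1/(s+t)} \, [q \, \eta_H \, \zeta_{j,H}]^{-1/(s+t)}$, with a constant depending only on $s$, $t$, and $\Cmon$. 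Collecting all $q$- and $\Cmon$-dependencies into $\Ccomp$ yields the stated bound. The crucial step is this final balancing, which converts an additive estimate with asymmetric exponents $1/s$ and $1/t$ into a genuine product-form bound with the symmetric exponent $1/(s+t)$.
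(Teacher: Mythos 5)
The paper does not supply a proof of this lemma; it is stated as a citation to \cite[Lemma~14]{ffghp2016}. Your blind reconstruction is nevertheless the right one: the argument behind that lemma is exactly the overlay construction you describe, namely pick quasi-optimal auxiliary meshes $\TT_u$ and $\TT_z$ from the approximation classes, overlay them with $\TT_H$, transfer the estimator bounds through quasi-monotonicity~\eqref{axiom:qm}, and bound the cardinality through the NVB overlay estimate $\#(\TT_H \oplus \TT_u \oplus \TT_z) - \#\TT_H \le (\#\TT_u - \#\TT_0) + (\#\TT_z - \#\TT_0)$. Your high-level plan is therefore sound and matches the intent of the cited result.

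Two points on the final balancing step deserve mention. First, you pose it as a Lagrange/AM--GM minimization, which indeed yields a bound of the stated form but with a prefactor $C_{s,t}$ that a priori depends on $s$ and $t$. This extra dependence is avoidable and, in fact, slightly obscures where the explicit constant $2$ in the lemma comes from: rather than minimizing the sum, simply choose $\epsilon_u, \epsilon_z$ so that the two cardinality contributions are \emph{equal}, i.e., $(\norm{u^\star}_{\mathbb{A}_s}/\epsilon_u)^{1/s} = (\norm{z_j^\star}_{\mathbb{A}_t}/\epsilon_z)^{1/t}$. Combined with $\Cmon^2 \epsilon_u \epsilon_z = q\,\eta_H \zeta_{j,H}$, a short computation (the exponents $1/s - t/(s(s+t)) = 1/(s+t)$ collapse cleanly) gives each summand equal to $\big[\Cmon^2 q^{-1} \norm{u^\star}_{\mathbb{A}_s} \norm{z_j^\star}_{\mathbb{A}_t}\big]^{1/(s+t)} \big[\eta_H \zeta_{j,H}\big]^{-1/(s+t)}$, hence the total is precisely $2$ times this, with $\Ccomp = \Cmon^2$ and no hidden $(s,t)$-dependence. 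Second, the $q$-dependence your construction produces is $q^{-1}$ inside the bracket, whereas the paper records $q^{-1/4}$; this discrepancy in the exponent is immaterial for how the lemma is used downstream (in Lemma~\ref{lemma:marking} the comparison lemma is invoked with a fixed $q = q_\star^2$, so only the existence of \emph{some} $q$-dependent constant matters), but you should be aware your constant does not literally reproduce the cited one. Finally, note that your $\TT_h$ depends on $j$, $s$, and $t$ through the tolerances; this is harmless here because the lemma is applied in the paper for one fixed $j$ and fixed rates, but the statement as worded (``for all $0<s,t$ and all $j$'') suggests $\TT_h$ is universal, which your construction does not deliver and which the cited lemma in \cite{ffghp2016} also does not claim in this strong sense.
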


\begin{proof}[\textbf{Proof of Lemma~\ref{lemma:marking}}]
The proof is split into four steps.
Step~1 provides control on the set $\MM_\ell^{uz}$.
Steps~2--3 concern \emph{regular marking} in Algorithm~\ref{algorithm:Ngoal}\eqref{algorithm:mark}, where $\MM_\ell = \MM_\ell^{uz}$ satisfies the D\"orfler marking criterion for $\eta_\ell$ or $\zetaa_\ell$.
Step~4 covers the case of \emph{irregular marking} in Algorithm~\ref{algorithm:Ngoal}\eqref{algorithm:mark}, where D\"orfler marking might fail since $\MM_\ell \subsetneqq \MM_\ell^{uz}$.

\textbf{Step~1 (control of $\boldsymbol{\#\MM_\ell^{uz}}$ for all $\boldsymbol{\ell \in \N_0}$).}
Recall that $\zetaa_\ell = \zeta_{j,\ell}$ with $j = {\rm mod}(\ell,N)+1$.
For $q = q_\star^2$ and $\TT_H = \TT_\ell$, the mesh $\TT_h \in \T(\TT_\ell)$ from Lemma~\ref{lemma:comparison} guarantees
\begin{subequations}\label{eq1:proof:rates:multigoal}
\begin{align}\label{eq1a:proof:rates:multigoal}
 \eta_h \zeta_{j,h}
 & \le q_\star^2 \, \eta_\ell \zeta_{j,\ell} \quad \text { and }
 \\ \label{eq1b:proof:rates:multigoal}
 \#\TT_h - \#\TT_\ell
 & \le
 2\, \big[\Ccomp \,q_\star^{-1/2}\, \norm{u^\star}_{\mathbb{A}_s}\, \norm{z_j^\star}_{\mathbb{A}_t}\big]^{1/(s+t)} \,
 \big[ \eta_\ell \zeta_{j,\ell} \big]^{-1/(s+t)}.
 \end{align}
\end{subequations}
The first inequality~\eqref{eq1a:proof:rates:multigoal} yields that
\begin{equation*}
 \eta_h \le q_\star \, \eta_\ell
 \quad \text{or} \quad
 \zeta_{j,h} \le q_\star \, \zeta_{j,\ell}.
\end{equation*}
Lemma~\ref{lemma:doerfler} and the quasi-minimal choice of $\MM_\ell^u$ and $\MM_\ell^z$ thus guarantee
\begin{equation*}
 \#\MM_\ell^u \le \Cmark \, \# (\TT_\ell \backslash \TT_h)
 \quad \text{or} \quad
 \#\MM_\ell^z \le \Cmark \, \# (\TT_\ell \backslash \TT_h).
\end{equation*}
In any case, this proves that
\begin{equation}\label{eq:step1:lemma:marking:multigoal}
 \begin{split}
  \#\MM_\ell^{uz}
 & \; \le \;  \Cmark \, \min\{ \#\MM_\ell^u , \#\MM_\ell^z \}
 \le \Cmark^2 \, \# (\TT_\ell \backslash \TT_h)
 \le \Cmark^2 \, ( \#\TT_h - \#\TT_\ell )
 \\& 
 \; \eqreff*{eq1b:proof:rates:multigoal}\le \;  
 2 \, \Cmark^2 \, \big[ \Ccomp \,q_\star^{-1/2} \,\norm{u^\star}_{\mathbb{A}_s} \,\norm{z_j^\star}_{\mathbb{A}_t} \big]^{1/(s+t)}\,
 \big[ \eta_\ell\, \zeta_{j,\ell} \big]^{-1/(s+t)}.
 \end{split}
\end{equation}

\textbf{Step~2 (control of $\boldsymbol{\#\MM_\ell}$ for $\boldsymbol{\ell \leq N-2}$).}
From the definition~\eqref{eq:lemma:marking:multigoal:constant} of $C_{0}$, it immediately follows that~\eqref{eq:lemma:marking} is satisfied with $C_1 = 1$ for $\ell \leq N-2$.

\textbf{Step~3 (control of $\boldsymbol{\#\MM_\ell}$ for regular marking and $\boldsymbol{N-1 \le \ell}$).}
We have that $\varrho_{\textup{irr}} \, \max_{i=1,\dots, N-1} \zetaa_{\ell-i} \le \zetaa_{\ell}$.
Quasi-monotonicity~\eqref{axiom:qm} shows that
\begin{equation*}
 \zeta_{J[\ell-i],\ell} 
 \leq 
 \Cmon \, \zeta_{J[\ell-i],\ell-i} 
 = 
 \Cmon \,\zetaa_{\ell-i}.
\end{equation*}
From this, it follows that
\begin{equation*}
 \sum_{i=0}^{N-1} \zeta_{i,\ell}=
  \sum_{i=0}^{N-1} \zeta_{J[\ell-i],\ell}
 \le 
 \zetaa_\ell + \Cmon \sum_{i=1}^{N-1} \zetaa_{\ell-i}
 \le 
 \big(1 + (N-1) \Cmon \,\varrho_{\textup{irr}}^{-1}\big) \, \zetaa_\ell.
\end{equation*}
Together with
\begin{equation}\label{eq:step3:lemma:marking:multigoal}
  C_1' \coloneqq 2 \, \Cmark^2 \,
  \big[\Ccomp\, q_\star^{-1/2}\,
   \big(1+ (N-1)\,\Cmon\, \varrho_{\textup{irr}}^{-1}\big)\big]^{1/(s+t)}
\end{equation} 
and~\eqref{eq:step1:lemma:marking:multigoal}, this yields
\begin{equation*}
  \begin{aligned}
 \#\MM_\ell^{uz} 
 & \; \eqreff*{eq:step1:lemma:marking:multigoal} \le \;
 2 \, \Cmark^2 \, \big[\Ccomp \, q_\star^{-1/2} \, \norm{u^\star}_{\mathbb{A}_s}\, \norm{z_j^\star}_{\mathbb{A}_t}\big]^{1/(s+t)}\,
 \big[ \eta_\ell \, \zetaa_{\ell} \big]^{-1/(s+t)}
 \\&
 \; \le \;  C'_1 \, \big[\norm{u^\star}_{\mathbb{A}_s} \,\norm{z_j^\star}_{\mathbb{A}_t}\big]^{1/(s+t)}\,
 \Delta_{\ell}^{-1/(s+t)}.
  \end{aligned}
\end{equation*}
This proves~\eqref{eq:lemma:marking} with $C_1 = \max \{1, C_1'\}$ for any $\ell \in \N$ with regular marking.
Overall,~\eqref{eq:lemma:marking} is thus proved for all $\ell \in \N_0$ with regular marking.

\textbf{Step~4 (control of $\boldsymbol{\#\MM_\ell}$ for irregular marking).}
Let $N-1 \le \ell$ and we have that there exists an index $i \in \{1,\dots, N-1\}$ such that $\zetaa_\ell < \varrho_{\textup{irr}} \, \zetaa_{\ell-i}$.
Let $0 \le \ell' < \ell$ be the largest index such that $\MM_{\ell'}$ is obtained by regular marking.
Note that by choice of $\ell'$ all mesh levels $k \in \{ \ell' +1, \dots, \ell\}$ employ irregular marking.
Hence, Algorithm~\ref{algorithm:Ngoal}\eqref{algorithm:mark} ensures that
\begin{equation*}
 \# \MM_\ell \le \dots \le \#\MM_{\ell'}
 \eqreff{eq:lemma:marking}\le 
 \max \Big\{C_{0}, C_1' \,\big[ \norm{u^\star}_{\mathbb{A}_s} \norm{z_j^\star}_{\mathbb{A}_t} \big]^{1/(s+t)}\Big\} \,
 \Delta_{\ell'}^{-1/(s+t)},
\end{equation*}
where $0 <C_1'$ is the constant defined in~\eqref{eq:step3:lemma:marking:multigoal}.
With quasi-monotonicity~\eqref{axiom:qm} (or $R$-linear convergence~\eqref{eq:multigoal:linearConvergence}), we verify that 
\begin{equation*}
\Delta_{\ell} \le \Cmon^2 \, \Delta_{\ell'}.
\end{equation*}
Combining the last two formulas, we prove~\eqref{eq:lemma:marking} with $C_1 = \Cmon^{2/(s+t)} \max \{1, C_1'\}$ for any $\ell \in \N$ with irregular marking. This concludes the proof.
\end{proof}

\begin{proof}[\bfseries Proof of Theorem~\ref{theorem:multigoal:optimalRates}]
Let $\ell \in \N$.
Recall the mesh-closure estimate from~\cite{bdd2004},
\begin{equation}\label{eq:mesh-closure}
\#\TT_\ell - \#\TT_0 \le C_\textup{mesh}\, \sum_{k=0}^{\ell-1} \#\MM_k
\end{equation} 
 with a constant $C_\textup{mesh}$ that depends only on $d$ and $\TT_0$. With this, the control on the number of marked elements~\eqref{eq:lemma:marking}, and R-linear convergence~\eqref{eq:multigoal:linearConvergence}, it follows that
\begin{equation*}
  \begin{aligned}
& \frac{1}{2}\, ( \#\TT_\ell - \#\TT_0 + 1)
 \le 
 \#\TT_\ell - \#\TT_0
 \eqreff{eq:mesh-closure}\le 
 C_\textup{mesh} \,
 \sum_{k=0}^{\ell-1} \#\MM_k
 \\&
 \eqreff{eq:lemma:marking}\lesssim
 \max \Big\{C_{0},\max_{j=1, \dots, N} \big[ \norm{u^\star}_{\mathbb{A}_s} \norm{z_j^\star}_{\mathbb{A}_t} \big]^{1/(s+t)} \Big\}  
 \Bigg[ \sum_{k=0}^{\ell-1} \Delta_k^{-1/(s+t)} \Bigg]
 \\&
 \eqreff{eq:multigoal:linearConvergence}\lesssim
 \max \Big\{C_{0},\max_{j=1, \dots, N} \big[ \norm{u^\star}_{\mathbb{A}_s} \,\norm{z_j^\star}_{\mathbb{A}_t} \big]^{1/(s+t)} \Big\}
    \Delta_{\ell}^{-1/(s+t)}.
  \end{aligned}
  \end{equation*}
  Rearranging this estimate, we see that, for all $\ell \in \N$,
  \begin{equation*}
    (\#\TT_\ell - \#\TT_0 + 1)^{s+t} \Delta_{\ell}
    \lesssim \max \Big\{C_{0},\max_{j=1, \dots, N} \big[ \norm{u^\star}_{\mathbb{A}_s} \norm{z_j^\star}_{\mathbb{A}_t} \big]^{1/(s+t)} \Big\}. 
  \end{equation*}
 Noting that this estimate is trivial for $\ell = 0$ by definition of $\norm{u^\star}_{\mathbb{A}_s}$ and $\norm{z_j^\star}_{\mathbb{A}_t}$,
 we conclude the proof.
\end{proof}

  We remark that an initial solve (and comparison) of all dual problems on the inexpensive initial level $\ell=0$ allows to improve the constant $C_0$ from~\eqref{eq:lemma:marking:multigoal:constant} in Theorem~\ref{theorem:multigoal:optimalRates} as follows.
\begin{corollary}\label{cor:multigoal:optimalRates}
  Let us adapt Algorithm~\ref{algorithm:Ngoal} such that on level $\ell=0$ all dual problems are solved and potentially renumbered such that $\zeta_{N,0} \le \zeta_{N-1,0} \le \ldots \le \zeta_{1,0}$.
  Moreover, we set $\zetaa_{\ell-i} \coloneqq \zeta_{i+1,0}$ for $\ell < i$ in the initialization of Algorithm~\ref{algorithm:Ngoal}.
  Under the assumptions of Theorem~\ref{theorem:multigoal:optimalRates} and for all $0 <s, t$, the optimality result~\eqref{eq:multigoal:thm:optimal} holds in the improved form
  \begin{equation}\label{eq:multigoal:cor:optimal}
    \sup_{\ell \in \N_0}(\#\TT_\ell - \#\TT_0 + 1)^{s+t} \Delta_{\ell}
    \le C_{\rm opt} \, \max_{j=1,\dots,N} \big[ \norm{u^\star}_{\mathbb{A}_s} \norm{z_j^\star}_{\mathbb{A}_t} \big]^{1/(s+t)},
  \end{equation}
  i.e., the constant $C_0$ from~\eqref{eq:lemma:marking:multigoal:constant} can be omitted since there holds
  \begin{equation*}
    C_0 \leq \widetilde{C}_1 \big[ \norm{u^\star}_{\mathbb{A}_s} \norm{z_j^\star}_{\mathbb{A}_t} \big]^{1/(s+t)}.
  \end{equation*}
  The constant $\widetilde{C}_1$ depends only on $\Cmark$, $\Cmon$, $q_\star$, $\varrho_{\textup{irr}}$, and $N$.
\end{corollary}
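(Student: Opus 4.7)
The plan is to strengthen Lemma~\ref{lemma:marking} so that, under the modified initialization of Corollary~\ref{cor:multigoal:optimalRates}, the bound
$\#\MM_\ell \le C_1 [ \norm{u^\star}_{\mathbb{A}_s} \norm{z_{J[\ell]}^\star}_{\mathbb{A}_t} ]^{1/(s+t)} \Delta_\ell^{-1/(s+t)}$
holds for \emph{all} $\ell \in \N_0$, and in particular on the initial levels $\ell = 0, 1, \ldots, N-2$ which are precisely those responsible for the constant $C_0$ in \eqref{eq:lemma:marking:multigoal:constant}. Substituting this uniform bound back into~\eqref{eq:lemma:marking:multigoal:constant} immediately gives $C_0 \le \widetilde{C}_1 \max_j [\norm{u^\star}_{\mathbb{A}_s} \norm{z_j^\star}_{\mathbb{A}_t}]^{1/(s+t)}$, at which point Theorem~\ref{theorem:multigoal:optimalRates} delivers~\eqref{eq:multigoal:cor:optimal}.

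First, I would observe that the renumbering $\zeta_{N,0} \le \cdots \le \zeta_{1,0}$ together with the initialization $\zetaa_{-i} = \zeta_{i+1,0}$ forces regular marking at level $\ell=0$: one has $\max_{i=1,\dots,N-1} \zetaa_{-i} = \zeta_{2,0} \le \zeta_{1,0} = \zetaa_0$ and $\varrho_{\textup{irr}} < 1$. Hence, at $\ell=0$, the set $\MM_0 = \MM_0^{uz}$ is already covered by Step~1 of the proof of Lemma~\ref{lemma:marking}. The remaining levels $\ell \in \{1,\dots,N-2\}$ are handled by a case distinction on regular vs.\ irregular marking, mirroring Steps~3 and~4 of that proof but using the new initialization to replace the previously trivial bounds.

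For regular marking at $\ell \in \{0,1,\dots,N-2\}$, the key observation is that, with the new initialization, each inactive goal $j \neq J[\ell]$ can be assigned a \emph{unique} index $i_j \in \{1,\dots,N-1\}$ satisfying $\zeta_{j,\ell} \le \Cmon \zetaa_{\ell-i_j}$: if $j \in \{1,\dots,\ell+1\}$ was active at some level $k=j-1 \le \ell-1$, quasi-monotonicity~\eqref{axiom:qm} gives $\zeta_{j,\ell} \le \Cmon \zeta_{j,k} = \Cmon \zetaa_{\ell-i_j}$ with $i_j = \ell-k$; if $j \in \{\ell+2,\dots,N\}$ has not yet been active, the initialization provides $\zetaa_{\ell-i_j} = \zeta_{j,0}$ with $i_j = j-1 > \ell$, and quasi-monotonicity again yields $\zeta_{j,\ell} \le \Cmon \zeta_{j,0} = \Cmon \zetaa_{\ell-i_j}$. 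Regular marking then gives $\zetaa_{\ell-i_j} \le \varrho_{\textup{irr}}^{-1} \zetaa_\ell$, and one concludes $\sum_{j=1}^N \zeta_{j,\ell} \le (1 + (N-1)\Cmon\varrho_{\textup{irr}}^{-1})\zetaa_\ell$, after which Step~1 of Lemma~\ref{lemma:marking} (applied to $\zeta_{J[\ell],\ell} = \zetaa_\ell$) yields the desired bound on $\#\MM_\ell$. The irregular case at some $\ell \in \{1,\dots,N-2\}$ follows Step~4 of Lemma~\ref{lemma:marking} verbatim: one retreats to the largest $\ell' < \ell$ with regular marking (which exists, since $\ell=0$ is regular), uses $\#\MM_\ell \le \#\MM_{\ell'}$ from the irregular-marking clause, and absorbs the change of level via the estimator version of quasi-monotonicity~\eqref{axiom:qm}, giving $\Delta_\ell \le \Cmon^2 \Delta_{\ell'}$.

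The main obstacle I anticipate is the combinatorial bookkeeping in the regular-marking case: verifying that the assignment $j \mapsto i_j$ is indeed a bijection from $\{1,\dots,N\}\setminus\{J[\ell]\}$ onto $\{1,\dots,N-1\}$, so that summing $\zeta_{j,\ell} \le \Cmon \zetaa_{\ell-i_j}$ over $j$ does not overcount any $\zetaa_{\ell-i}$ and the regular-marking inequality can be applied cleanly. Once this identity is in place, the remainder of the argument is a direct reassembly of the pieces already present in the proofs of Lemma~\ref{lemma:marking} and Theorem~\ref{theorem:multigoal:optimalRates}.
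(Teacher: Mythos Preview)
Your proposal is correct and follows the same route as the paper: extend Steps~3--4 of the proof of Lemma~\ref{lemma:marking} to the initial levels $\ell \in \{0,\dots,N-2\}$ via the modified initialization, with the sorting forcing regular marking at $\ell=0$ so that the irregular case can always retreat to a regular predecessor. The paper's own argument is in fact terser than yours --- at $\ell=0$ it bypasses the bijection and uses the sorting directly via $\Delta_0 \le N\,\eta_0\,\zeta_{1,0}$, then simply asserts that the remaining levels follow ``from the proof of Lemma~\ref{lemma:marking}'' --- so your write-up supplies details the paper omits; note also that the bijection $j\mapsto i_j$ you flag as the main obstacle is not strictly needed, since each $\zetaa_{\ell-i_j} \le \max_i \zetaa_{\ell-i} \le \varrho_{\textup{irr}}^{-1}\zetaa_\ell$ and summing $N-1$ such bounds already gives $(N-1)\Cmon\varrho_{\textup{irr}}^{-1}\zetaa_\ell$ regardless of whether the $i_j$ are distinct.
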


\begin{proof}
  We adapt the proof of Lemma~\ref{lemma:marking} and focus on $\ell = 0$.
  Since regular marking is performed on level $\ell=0$, the estimate~\eqref{eq:lemma:marking} from the proof of Lemma~\ref{lemma:marking} applies and yields
  \begin{equation*}
    \begin{aligned}
    \# \MM_0 = \#\MM_0^{uz} 
    &\le 
    2 \, \Cmark^2 \, \big[\Ccomp \, q_\star^{-1/2}\, \norm{u^\star}_{\mathbb{A}_s} \, \norm{z_1^\star}_{\mathbb{A}_t}\big]^{1/(s+t)}\,
    \big[ \eta_0 \, \zeta_{1,0}\,  \big]^{-1/(s+t)}.
    \end{aligned}
  \end{equation*}
  Moreover, by the potential renumbering of the dual problems, we have
  \begin{equation*}
   \Delta_0 \leq N \,\eta_0\, \zeta_{1,0}.
  \end{equation*}
  The control of $\# \MM_{\ell}$ for any $0 \le \ell$ follows from the proof of Lemma~\ref{lemma:marking}, where we can omit the constant $C_0$ by virtue of the last estimate.
  This concludes the proof.
\end{proof}

\section{Numerical experiments}\label{section:numerics}
In this section, we present several numerical experiments that underline the theoretical findings of this work. In particular, we verify the optimal convergence rates of the proposed multi-goal adaptive algorithm (Algorithm~\ref{algorithm:Ngoal}) for several experimental setups with unknown exact solutions and decoupled goal functionals in the dual problems. 
All experiments are performed with the open-source \textsc{Matlab} library MooAFEM~\cite{ip2022}.

\subsection{Experiment with \texorpdfstring{$\boldsymbol{N=3}$}{N=3} goals}\label{experiment:smooth}

We consider a variant of~\cite[Example~7.3]{ms2009} on the domain $\Omega = (0,1)^2 \subset \R^2$ with homogeneous Dirichlet boundary conditions. The diffusion coefficient $\boldsymbol{A} = 1$, and the right-hand sides $f=0$ and $\boldsymbol{f} = (-1, 0)^\top \chi_{\Omega_1}$. The goals are defined as $g_1= g_2 = g_3 = 0$, and $\boldsymbol{g_1} = (1,0)^\top \chi_{\Omega_2}$,  $\boldsymbol{g_2} = (1,0)^\top \chi_{\Omega_3}$, and $\boldsymbol{g_3} = (0,1.5)^\top \chi_{\Omega_4}$, where $\chi_{\Omega_i}$ denotes the characteristic function on the subdomain $\Omega_i$ defined as in Figure~\ref{figure:smoothSolutionMeshes}. The adaptivity parameters are set to $\theta = 0.5$, $\Cmark = 2$, and $\varrho_{\textup{irr}} = 0.25$. 
If not stated otherwise, we use Algorithm~\ref{algorithm:Ngoal}, where we select exactly $\# \MM_{\ell-1}$ elements from the set $\MM_{\ell}^{uz}$ in the irregular marking step (\ref{algorithm:mark}).
Figure~\ref{figure:smoothSolutionMeshes} also displays snapshot meshes at a certain point in the algorithm for both $p=1$ (left) and $p=3$ (right).

Figure~\ref{figure:smoothSolutionConvergence} shows convergence rates for the multigoal-error product $\Delta_\ell$ from~\eqref{eq:quasi-error:multigoal} for $p \in \{1,2,3\}$ (left) and the individual error estimators $\eta_{\ell}$ and $\zeta_{i, \ell}$ with $i=1,2,3$ for $p=2$ (right). Optimal convergence rates $\mathcal{O}(\texttt{nDof}^{-p})$ for the multigoal-error estimator and $\mathcal{O}(\texttt{nDof}^{-p/2})$ for the individual estimators are attained. 
Note that the values of the error estimators $\zeta_{i,\ell}$ do not change when they are inactive, even though the mesh continues to be refined, since the inactive estimators are not recomputed. Thus, the zoom-in in Figure~\ref{figure:smoothSolutionConvergence} (right) exhibits a staircase-like behavior.

Figure~\ref{figure:smoothSolutionConvergenceAFEM} compares the multigoal-error estimator of NGO-AFEM with other adaptive schemes. The left panel investigates standard AFEM, i.e., skipping step~(\ref{algorithm:solve:dual}) and step~(\ref{algorithm:markall}) in Algorithm~\ref{algorithm:Ngoal} such that $\MM_{\ell} = \MM_{\ell}^{u}$ for all $\ell \in \N_0$. The right panel depicts the case where NGO-AFEM is run with $N=2$, i.e., goal $G_3$ is never active, and step~(\ref{algorithm:solve:dual}) in Algorithm~\ref{algorithm:Ngoal} solves only for $j \in \{1,2\}$. In both cases, only NGO-AFEM recovers optimal convergence rates, while the two variants perform suboptimally. In case of standard AFEM, the convergence rate of all dual estimators $\zeta_{i,\ell}$ for $i=1,2,3$ is reduced, since the mesh is only refined with respect to the primal problem. For the NGO-AFEM with $N=2$, the dual estimator $\zeta_{3,\ell}$ is not resolved properly and is thus suboptimally convergent. This underlines the importance of resolving the singularities of all dual problems for optimal convergence rates of the multigoal-error estimator $\Delta_\ell$.

\begin{figure}
\includegraphics[height=0.3\textwidth]{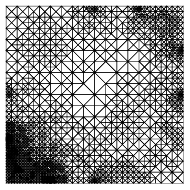} \hfil
\includegraphics[height=0.3\textwidth]{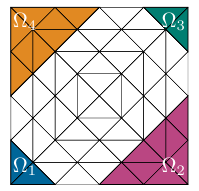} \hfil
\includegraphics[height=0.3\textwidth]{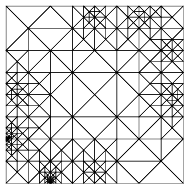}
\caption{Meshes for the experiment from section~\ref{experiment:smooth}: Initial mesh (middle) and results for $p=1$ (left) with $\texttt{nDof} = 1892$, and $p=3$ (right) for $\texttt{nDof} = 2053$. In both cases, the mesh is refined mainly for the primal singularities in $\Omega_1$ in the lower left corner.}
\label{figure:smoothSolutionMeshes}
\end{figure}

\begin{figure}
\includegraphics[width=0.48\textwidth]{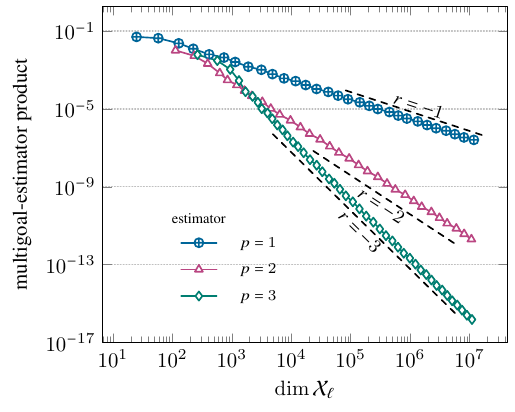}
\includegraphics[width=0.48\textwidth]{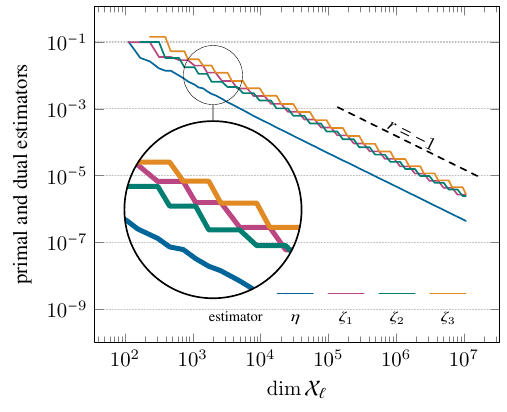}
\caption{Convergence plots for the experiment from section~\ref{experiment:smooth}: Left: Convergence of the multigoal-estimator product $\Delta_\ell = \eta_\ell \sum_{i=1}^3 \zeta_{i,\ell}$ from~\eqref{eq:quasi-error:multigoal} for polynomial degrees $p\in \{1,2,3\}$. Right: Convergence of the individual estimators $\eta$ as well as $\zeta_1$, $\zeta_2$, and $\zeta_3$ for $p=2$. The estimators on the right-hand side are colored in accordance with the initial mesh (middle panel in Figure~\ref{figure:smoothSolutionMeshes}).}
\label{figure:smoothSolutionConvergence}
\end{figure}

\begin{figure}
\includegraphics[width=0.48\textwidth]{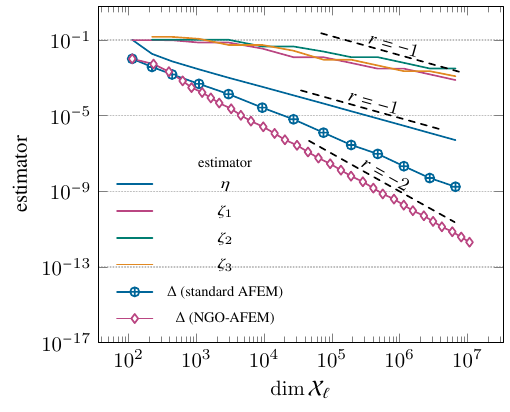}
\includegraphics[width=0.48\textwidth]{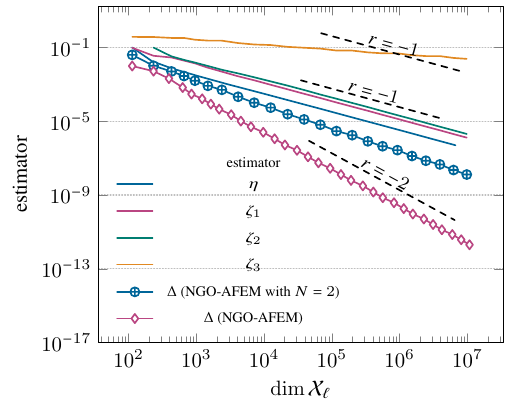}
\caption{Convergence plots for the experiment from Section~\ref{experiment:smooth} for alternative schemes with $p=2$. For further details, see subsection~\ref{experiment:smooth}. Left: Classical AFEM. Right: Standard NGO-AFEM with $N=2$, where goal $G_3$ is never active (while its estimator $\zeta_3$ is still computed), i.e., step~(\ref{algorithm:solve:dual}) in Algorithm~\ref{algorithm:Ngoal} solves only for $j \in \{1,2\}$. The multigoal-estimator product for NGO-AFEM with $N=3$ from Figure~\ref{figure:smoothSolutionConvergence} is added for reference.}
\label{figure:smoothSolutionConvergenceAFEM}
\end{figure}

\subsection{Experiment with \texorpdfstring{$\boldsymbol{N=8}$}{N=8} goals}\label{experiment:nonsmooth}

We consider quadratic FE spaces on the Z-shaped domain $\Omega = (-1,1)^2 \setminus \textup{conv}\{(0,0), (-1,0), (-1,-1)\} \subset \R^2$ with Dirichlet boundary conditions on the reentrant corner $\textup{conv}\{(-1,0), (0,0)\} \cup \textup{conv}\{(0,0), (-1,-1)\}$ and Neumann boundary conditions on the remaining part of the boundary $\partial \Omega$; see Figure~\ref{figure:nonsmoothSolutionMeshes} for the geometric setup with the Dirichlet boundary in red. The diffusion coefficient is given by $\boldsymbol{A} = 1$ and the right-hand sides are given by $f=0$ and $g_i = 0$ for $i=1, \ldots, 8$ as well as $\boldsymbol{f} = (-10,0)^\top \chi_{\Omega_1}$, $\boldsymbol{g}_i = (-10, 0)^\top \chi_{\Omega_{i}}$ for $\text{mod}(i, 3) = 0$, $\boldsymbol{g}_i = (1, 0)^\top \chi_{\Omega_{i}}$ for $\text{mod}(i, 3) = 1$, and $\boldsymbol{g}_i = (0, 100)^\top \chi_{\Omega_{i}}$ for $\textup{mod}(i, 3) = 2$, where $i \in \{1, \ldots, 8\}$. This ensures that all directions touch both boundary conditions. The adaptivity parameters read $\theta =0.3$, $\Cmark = 2$, and $\varrho_{\textup{irr}} = 0.1$.

Figure~\ref{figure:nonsmoothSolutionMeshes} (right) also displays a snapshot mesh of NGO-AFEM after several adaptive steps, where the mesh is refined towards the reentrant corner to resolve the primal singularity as well as towards all subdomains $\Omega_{\boldsymbol{g}_i}$ for $i=1, \ldots, 8$ to resolve the dual singularities.

\begin{figure}
\includegraphics[height=0.3\textwidth]{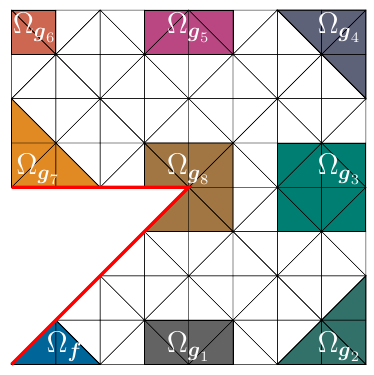}
\hfil
\includegraphics[height=0.30\textwidth]{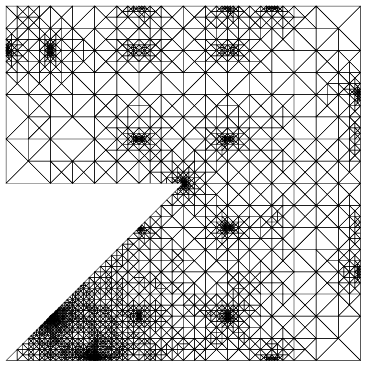}
\caption{Geometric setup of Experiment with nonsmooth solution from Section~\ref{experiment:nonsmooth}: Initial adaptive mesh on the Z-shaped domain $\Omega$ with reentrant corner at the origin. All subdomains $\Omega_{\boldsymbol{f}}$ and $\Omega_{\boldsymbol{g}_i}$ with $i=1, \ldots, 8$ are indicated. The boundary on the reentrant corner is highlighted in red, indicating homogeneous Dirichlet boundary conditions, while its complement features homogeneous Neumann boundary conditions.}
\label{figure:nonsmoothSolutionMeshes}
\end{figure}

The experiment compares four instances of Algorithm~\ref{algorithm:Ngoal} that differ in two aspects: 
\begin{enumerate}
    \item For irregular marking in Algorithm~\ref{algorithm:Ngoal}(\ref{algorithm:mark}), we either select exactly $\# \MM_{\ell-1}$ elements from the set $\MM_{\ell}^{uz}$ to form the set $\MM_\ell$ with marking from~\cite{fpz2016}, or none at all, i.e., $\MM_\ell = \emptyset$;
    \item \label{variant:two} We sort the error estimators $\zeta_{i,0}$ for $i=1, \ldots, N$ as proposed in Corollary~\ref{cor:multigoal:optimalRates}, or we do not sort them at all.
\end{enumerate}
Figure~\ref{figure:nonsmoothComparison} depicts all four cases. The $x$-axis is adjusted to display the cumulative degrees of freedom $\texttt{cumnDof} \coloneqq  \sum_{0 \le \ell} \texttt{nDof}_\ell$ and optimal convergence rates $\mathcal{O}(\texttt{cumnDof}^{-1})$ are observed in all cases over the cumulative degrees of freedom. 

We see that the preasymptotic regime is fairly long in all cases, but variant [$\# \MM_{\ell-1}$ marked, not sorted] (top left) reaches the asymptotic regime the earliest (see $\zeta_7$ for example). We also see that standard NGO-AFEM [$\# \MM_{\ell-1}$ marked, not sorted] (top left) is the only variant, where the primal estimator is not a lot smaller than all dual estimators. We note that the case of no irregular marking, i.e., the variant [$0$ marked, not sorted] (top right) and the variant [$0$ marked, sorted] (bottom right), the estimator $\eta_\ell$ indicates levels where no refinement is performed, while in the case of variant [$\# \MM_{\ell-1}$ marked, not sorted] (top left) and variant [$\# \MM_{\ell-1}$ marked, sorted] (bottom left), the estimator $\eta_\ell$ is wiggly but always reduced. 

\begin{figure}
    \centering
    \subfloat[$\# \MM_{\ell-1}$ marked, not sorted]{
        \includegraphics[width=0.475\textwidth]{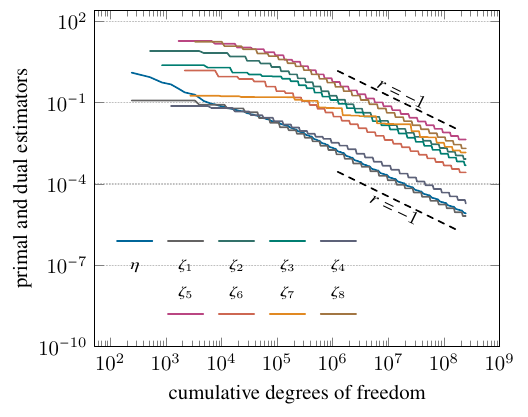}
        \label{figure:nonsmoothComparison:one}
    }
    \subfloat[$0$ marked, not sorted]{
        \includegraphics[width=0.475\textwidth]{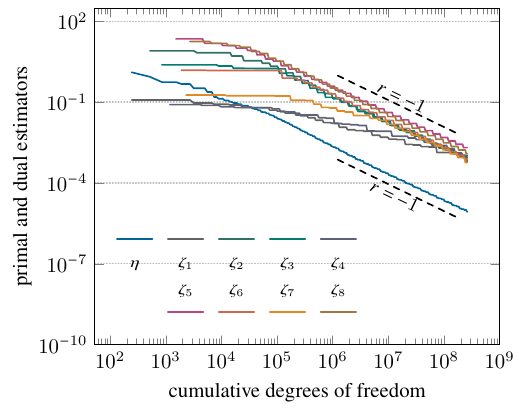}
        \label{figure:nonsmoothComparison:two}
    }

    \medskip
    \subfloat[$\# \MM_{\ell-1}$ marked, sorted]{
        \includegraphics[width=0.475\textwidth]{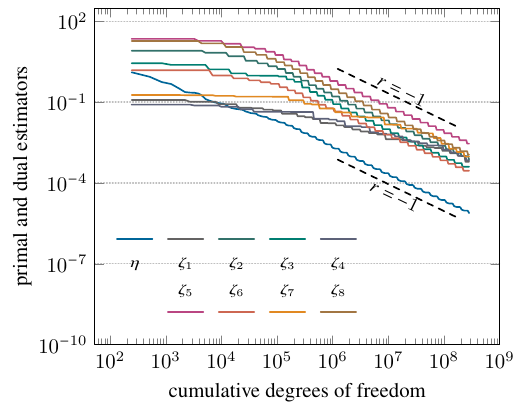}
        \label{figure:nonsmoothComparison:three}
    }
    \subfloat[$0$ marked, sorted]{
        \includegraphics[width=0.475\textwidth]{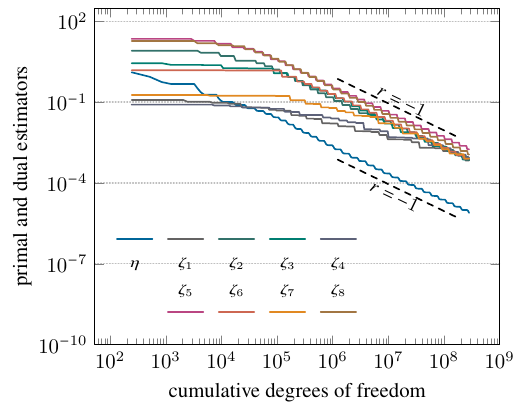}
        \label{figure:nonsmoothComparison:four}
    }

\caption{Convergence plots for the experiment from Section~\ref{experiment:nonsmooth} for the individual estimators $\eta$ as well as $\zeta_1, \ldots, \zeta_8$ for $p=2$. The estimators on the left-hand side are colored in accordance with the initial mesh in Figure~\ref{figure:nonsmoothSolutionMeshes}.}
\label{figure:nonsmoothComparison}
\end{figure}

Observing the multigoal-error estimator $\Delta_\ell$ from~\eqref{eq:quasi-error:multigoal} in Figure~\ref{figure:nonsmoothSolutionConvergenceProduct}, we see that all four variants perform equally well and reach optimal convergence rates $\mathcal{O}(\texttt{cumndof}^{-2})$, with a very small favor for the variants without irregular marking; see the zoom-in in Figure~\ref{figure:nonsmoothSolutionConvergenceProduct}.

\begin{figure}
\includegraphics[height=0.3\textwidth]{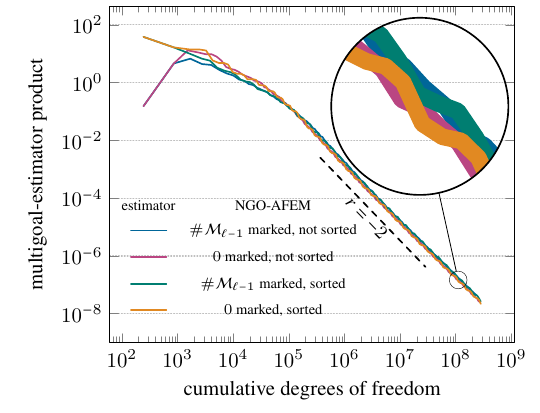}
\caption{Convergence plot for the experiment from Section~\ref{experiment:nonsmooth} for the multigoal-estimator product $\Delta_\ell = \eta_\ell \sum_{i=1}^8 \zeta_{i,\ell}$ from~\eqref{eq:multigoal:error} for all four variants explained in subsection~\ref{experiment:nonsmooth}.}
\label{figure:nonsmoothSolutionConvergenceProduct}
\end{figure}

\section{Conclusion}\label{section:conclusion}
We have presented and analyzed a novel multigoal-oriented AFEM with $N$ goals (NGO-AFEM) for symmetric linear elliptic PDEs that can simultaneously deal with multiple linear goal functionals while only solving two discrete problems per adaptive step. The thorough convergence analysis verifies that the proposed algorithm guarantees R-linear convergence with optimal convergence rates. The gist of the new algorithm is a marking strategy that combines the D\"orfler marking for the primal and one active dual problem but also exercises a certain cardinality control if the currently active dual problem is not the most pressing one in terms of the size of the corresponding error estimator. 

To keep the presentation of this paper reasonable and focused on the new marking strategy, we have considered a symmetric linear elliptic PDE model problem and linear goal functionals. Extensions may revolve around the following aspects: 

First, one may consider nonsymmetric or even nonlinear PDEs as model problems. This would require to adapt the dual problems accordingly, but the crucial Pythagorean identity for linear convergence may be replaced by arguments from~\cite{feischl2022} that apply to general second-order linear elliptic PDEs or compactness arguments which guarantee a quasi-Pythagorean identity that applies also to certain nonlinear PDEs~\cite{ffp2014, bhp2017, bbimp2022}. Results on goal-oriented AFEM with \emph{single} goal within this problem class are, e.g.,~\cite{bip2020, bbps2025} but also (locally Lipschitz continuous) semilinear problems are possible~\cite{bbimp2022}.

Second, one may consider an algebraic solver to solve the discrete problems by means of an inner loop. The algorithmic interplay of adaptive mesh refinement and inexact algebraic solvers has been studied in~\cite{ghps2021, hpsv2021, bhimps2023} for standard AFEM and also in the goal-oriented setting with \emph{single} goal in~\cite{bip2020,dc2023, bgip2023, bbps2025}. We expect that the main ideas of our algorithm and its analysis can be transferred to this more general setting.

Third, also quadratic goals in the spirit of~\cite{bip2020} fall within the scope of potential extensions for future research. We note that the extension to nonlinear problems and quadratic goals seems to be more involved, since the primal and dual problems do not fully decouple as in the linear case. Already for a single goal, the goal estimate will be of the form $\eta_{\ell}^2 + \eta_\ell \zeta_\ell$ (instead of only $\eta_\ell \,\zeta_\ell$) and, thus, the marking strategy and its analysis need to be adapted accordingly.

Finally, the marking strategy from~\cite{bet2011} first combines the estimator quantities to a single estimator and then marks the mesh for refinement (whereas the approach from~\cite{ms2009, fpz2016} is of the form \emph{mark first, combine second}). This alternative marking strategy is a more global version of the so-called DWR strategy that numerically performs very well and thus is widely used in practice. It will be interesting to understand whether such a marking strategy can also be analyzed within the framework of an adapted \texttt{MARK} module as presented in this work.


\renewcommand*{\bibfont}{\footnotesize} 

\printbibliography

@book {ao2000,
	AUTHOR = {Ainsworth, Mark and Oden, J. Tinsley},
	TITLE = {A posteriori error estimation in finite element analysis},
	SERIES = {Pure and Applied Mathematics (New York)},
	PUBLISHER = {Wiley-Interscience [John Wiley \& Sons], New York},
	YEAR = {2000},
	PAGES = {xx+240},
	ISBN = {0-471-29411-X},
	MRCLASS = {65-02 (65N15)},
	MRNUMBER = {1885308},
	MRREVIEWER = {Ricardo\ G.\ Dur\'an},
	DOI = {10.1002/9781118032824},
}

@article {ffghp2016,
	AUTHOR = {Feischl, Michael and Gantner, Gregor and Haberl, Alexander and
	Praetorius, Dirk and F\"{u}hrer, Thomas},
	TITLE = {Adaptive boundary element methods for optimal convergence of
	point errors},
	JOURNAL = {Numer. Math.},
	FJOURNAL = {Numerische Mathematik},
	VOLUME = {132},
	YEAR = {2016},
	NUMBER = {3},
	PAGES = {541--567},
	ISSN = {0029-599X},
	MRCLASS = {65N38 (65N50 65Y20)},
	MRNUMBER = {3457440},
	MRREVIEWER = {Paul Andrew Martin},
	DOI = {10.1007/s00211-015-0727-4},
}

@article {elw2019,
	AUTHOR = {Endtmayer, Bernhard and Langer, Ulrich and Wick, Thomas},
	TITLE = {Multigoal-oriented error estimates for non-linear problems},
	JOURNAL = {J. Numer. Math.},
	FJOURNAL = {Journal of Numerical Mathematics},
	VOLUME = {27},
	YEAR = {2019},
	NUMBER = {4},
	PAGES = {215--236},
	ISSN = {1570-2820,1569-3953},
	MRCLASS = {65N30 (35J20 35J92 35Q74 49M15 49M41 65J15)},
	MRNUMBER = {4078182},
	DOI = {10.1515/jnma-2018-0038},
}

@article {elnww2020,
	AUTHOR = {Endtmayer, B. and Langer, U. and Neitzel, I. and Wick, T. and
	Wollner, W.},
	TITLE = {Multigoal-oriented optimal control problems with nonlinear
	{PDE} constraints},
	JOURNAL = {Comput. Math. Appl.},
	FJOURNAL = {Computers \& Mathematics with Applications. An International
	Journal},
	VOLUME = {79},
	YEAR = {2020},
	NUMBER = {10},
	PAGES = {3001--3026},
	ISSN = {0898-1221,1873-7668},
	MRCLASS = {49M41 (49J20 65N30)},
	MRNUMBER = {4083032},
	DOI = {10.1016/j.camwa.2020.01.005},
}

@article {h2008,
	AUTHOR = {Hartmann, Ralf},
	TITLE = {Multitarget error estimation and adaptivity in aerodynamic
	flow simulations},
	JOURNAL = {SIAM J. Sci. Comput.},
	FJOURNAL = {SIAM Journal on Scientific Computing},
	VOLUME = {31},
	YEAR = {2008},
	NUMBER = {1},
	PAGES = {708--731},
	ISSN = {1064-8275,1095-7197},
	MRCLASS = {76M10 (65N15 76G25)},
	MRNUMBER = {2460796},
	DOI = {10.1137/070710962},
}

@incollection {hh2003,
	AUTHOR = {Hartmann, Ralf and Houston, Paul},
	TITLE = {Goal-oriented a posteriori error estimation for multiple
	target functionals},
	BOOKTITLE = {Hyperbolic problems: theory, numerics, applications},
	PAGES = {579--588},
	PUBLISHER = {Springer, Berlin},
	YEAR = {2003},
	ISBN = {3-540-44333-9},
	MRCLASS = {35A15 (49M15 65M15 65M60)},
	MRNUMBER = {2053207},
}

@article {bgip2023,
	AUTHOR = {Becker, Roland and Gantner, Gregor and Innerberger, Michael
	and Praetorius, Dirk},
	TITLE = {Goal-oriented adaptive finite element methods with optimal
	computational complexity},
	JOURNAL = {Numer. Math.},
	FJOURNAL = {Numerische Mathematik},
	VOLUME = {153},
	YEAR = {2023},
	NUMBER = {1},
	PAGES = {111--140},
	ISSN = {0029-599X,0945-3245},
	MRCLASS = {65N30 (41A25 65N22 65N50 65Y20)},
	MRNUMBER = {4530203},
	MRREVIEWER = {Svetozar\ D.\ Margenov},
	DOI = {10.1007/s00211-022-01334-8},
}

@article {bip2020,
  title={Optimal convergence rates for goal-oriented FEM with quadratic goal functional},
  author={Becker, Roland and Innerberger, Michael and Praetorius, Dirk},
  journal={Comput. Meth. Appl. Math.},
  volume={21},
  number={2},
  pages={267--288},
  year={2021},
  publisher={De Gruyter}
}

@article {ms2009,
    AUTHOR = {Mommer, Mario S. and Stevenson, Rob},
     TITLE = {A goal-oriented adaptive finite element method with
              convergence rates},
   JOURNAL = {SIAM J. Numer. Anal.},
  FJOURNAL = {SIAM Journal on Numerical Analysis},
    VOLUME = {47},
      YEAR = {2009},
    NUMBER = {2},
     PAGES = {861--886},
      ISSN = {0036-1429},
   MRCLASS = {65N30 (65N15)},
  MRNUMBER = {2485436},
MRREVIEWER = {Dietrich Braess},
       DOI = {10.1137/060675666},
       URL = {https://doi.org/10.1137/060675666},
}

@article {gs2002,
    AUTHOR = {Giles, Michael B. and S\"{u}li, Endre},
     TITLE = {Adjoint methods for {PDE}s: a posteriori error analysis and
              postprocessing by duality},
   JOURNAL = {Acta Numer.},
  FJOURNAL = {Acta Numerica},
    VOLUME = {11},
      YEAR = {2002},
     PAGES = {145--236},
      ISSN = {0962-4929},
   MRCLASS = {65N15 (35G15 35J25 65N30)},
  MRNUMBER = {2009374},
MRREVIEWER = {Lucas J\'{o}dar},
       DOI = {10.1017/S096249290200003X},
       URL = {https://doi.org/10.1017/S096249290200003X},
}

@article {bet2011,
    AUTHOR = {Becker, Roland and Estecahandy, Elodie and Trujillo, David},
     TITLE = {Weighted marking for goal-oriented adaptive finite element
              methods},
   JOURNAL = {SIAM J. Numer. Anal.},
  FJOURNAL = {SIAM Journal on Numerical Analysis},
    VOLUME = {49},
      YEAR = {2011},
    NUMBER = {6},
     PAGES = {2451--2469},
      ISSN = {0036-1429},
   MRCLASS = {65N30 (65N12 65N50)},
  MRNUMBER = {2873242},
MRREVIEWER = {Alexandre Ern},
       DOI = {10.1137/100794298},
       URL = {https://doi.org/10.1137/100794298},
}

@book{v2013,
	Author = {Verf{\"u}rth, R{\"u}diger},
	Date-Added = {2019-04-09 08:24:00 +0200},
	Date-Modified = {2019-04-09 08:24:00 +0200},
	Doi = {10.1093/acprof:oso/9780199679423.001.0001},
	Publisher = {Oxford University Press},
	Address = {Oxford},
	Title = {A posteriori error estimation techniques for finite element methods},
	Year = {2013},
	Bdsk-Url-1 = {https://doi.org/10.1093/acprof:oso/9780199679423.001.0001}}

@article{cg2012,
	Author = {Carstensen, Carsten and Gedicke, Joscha},
	Doi = {10.1137/090769430},
	Fjournal = {SIAM Journal on Numerical Analysis},
	Issn = {0036-1429},
	Journal = {SIAM J. Numer. Anal.},
	Mrclass = {65N25 (65N15 65N30)},
	Mrnumber = {2970733},
	Mrreviewer = {V. L. Makarov},
	Number = {3},
	Pages = {1029--1057},
	Title = {An adaptive finite element eigenvalue solver of asymptotic quasi-optimal computational complexity},
	Url = {https://doi.org/10.1137/090769430},
	Volume = {50},
	Year = {2012},
	Bdsk-Url-1 = {https://doi.org/10.1137/090769430}}

@article{s2008,
	Author = {Stevenson, Rob},
	Doi = {10.1090/S0025-5718-07-01959-X},
	Fjournal = {Mathematics of Computation},
	Issn = {0025-5718},
	Journal = {Math. Comp.},
	Mrclass = {65N50},
	Mrnumber = {2353951},
	Number = {261},
	Pages = {227--241},
	Title = {The completion of locally refined simplicial partitions created by bisection},
	Url = {https://doi.org/10.1090/S0025-5718-07-01959-X},
	Volume = {77},
	Year = {2008},
	Bdsk-Url-1 = {https://doi.org/10.1090/S0025-5718-07-01959-X}}

@article{kpp2013,
	Author = {Karkulik, Michael and Pavlicek, David and Praetorius, Dirk},
	Doi = {10.1007/s00365-013-9192-4},
	Fjournal = {Constructive Approximation. An International Journal for Approximations and Expansions},
	Issn = {0176-4276},
	Journal = {Constr. Approx.},
	Mrclass = {65N50 (65N30 65Y20)},
	Mrnumber = {3097045},
	Number = {2},
	Pages = {213--234},
	Title = {On 2{D} newest vertex bisection: optimality of mesh-closure and {$H^1$}-stability of {$L_2$}-projection},
	Url = {https://doi.org/10.1007/s00365-013-9192-4},
	Volume = {38},
	Year = {2013},
	Bdsk-Url-1 = {https://doi.org/10.1007/s00365-013-9192-4}}

@article{bhp2017,
	Author = {Bespalov, Alex and Haberl, Alexander and Praetorius, Dirk},
	Doi = {10.1016/j.cma.2016.12.014},
	Fjournal = {Computer Methods in Applied Mechanics and Engineering},
	Issn = {0045-7825},
	Journal = {Comput. Methods Appl. Mech. Engrg.},
	Mrclass = {65N30 (35J05 35J25 65N12 65N22 65N50)},
	Mrnumber = {3612759},
	Mrreviewer = {Hans-Peter Helfrich},
	Pages = {318--340},
	Title = {Adaptive {FEM} with coarse initial mesh guarantees optimal convergence rates for compactly perturbed elliptic problems},
	Url = {https://doi.org/10.1016/j.cma.2016.12.014},
	Volume = {317},
	Year = {2017},
	Bdsk-Url-1 = {https://doi.org/10.1016/j.cma.2016.12.014}}

@article{bdd2004,
	Author = {Binev, Peter and Dahmen, Wolfgang and DeVore, Ron},
	Doi = {10.1007/s00211-003-0492-7},
	Fjournal = {Numerische Mathematik},
	Issn = {0029-599X},
	Journal = {Numer. Math.},
	Mrclass = {65N50 (65N12 65N30 65Y20 68W25 68W40)},
	Mrnumber = {2050077},
	Mrreviewer = {Thomas Apel},
	Number = {2},
	Pages = {219--268},
	Title = {Adaptive finite element methods with convergence rates},
	Url = {https://doi.org/10.1007/s00211-003-0492-7},
	Volume = {97},
	Year = {2004},
	Bdsk-Url-1 = {https://doi.org/10.1007/s00211-003-0492-7}}

@article{ffp2014,
	Author = {Feischl, Michael and F\"{u}hrer, Thomas and Praetorius, Dirk},
	Doi = {10.1137/120897225},
	Fjournal = {SIAM Journal on Numerical Analysis},
	Issn = {0036-1429},
	Journal = {SIAM J. Numer. Anal.},
	Mrclass = {65N30 (65N12 65N50)},
	Mrnumber = {3176325},
	Mrreviewer = {Axel Kr\"{o}ner},
	Number = {2},
	Pages = {601--625},
	Title = {Adaptive {FEM} with optimal convergence rates for a certain class of nonsymmetric and possibly nonlinear problems},
	Url = {https://doi.org/10.1137/120897225},
	Volume = {52},
	Year = {2014},
	Bdsk-Url-1 = {https://doi.org/10.1137/120897225}}

@article{ckns2008,
	Author = {Cascon, J. Manuel and Kreuzer, Christian and Nochetto, Ricardo H. and Siebert, Kunibert G.},
	Doi = {10.1137/07069047X},
	Fjournal = {SIAM Journal on Numerical Analysis},
	Issn = {0036-1429},
	Journal = {SIAM J. Numer. Anal.},
	Mrclass = {65N30 (41A25)},
	Mrnumber = {2421046},
	Mrreviewer = {Hans-Peter Helfrich},
	Number = {5},
	Pages = {2524--2550},
	Title = {Quasi-optimal convergence rate for an adaptive finite element method},
	Url = {https://doi.org/10.1137/07069047X},
	Volume = {46},
	Year = {2008},
%	%shorthand = {CKNS08},
	Bdsk-Url-1 = {https://doi.org/10.1137/07069047X}}

@article{doerfler1996,
	Author = {D{\"{o}}rfler, Willy},
	Doi = {10.1137/0733054},
	Fjournal = {SIAM Journal on Numerical Analysis},
	Issn = {0036-1429},
	Journal = {SIAM J. Numer. Anal.},
	Mrclass = {65N50 (65N55)},
	Mrnumber = {1393904},
	Mrreviewer = {S. F. McCormick},
	Number = {3},
	Pages = {1106--1124},
	Title = {A convergent adaptive algorithm for {P}oisson's equation},
	Url = {https://doi.org/10.1137/0733054},
	Volume = {33},
	Year = {1996},
	Bdsk-Url-1 = {https://doi.org/10.1137/0733054}}

@article{axioms,
	Author = {Carstensen, Carsten and Feischl, Michael and Page, Marcus and Praetorius, Dirk},
	Doi = {10.1016/j.camwa.2013.12.003},
	Fjournal = {Computers \& Mathematics with Applications. An International Journal},
	Issn = {0898-1221},
	Journal = {Comput. Math. Appl.},
	Mrclass = {65N50 (65N12 65N22 65N30 65N38)},
	Mrnumber = {3170325},
	Mrreviewer = {Tsu-Fen Chen},
	Number = {6},
	Pages = {1195--1253},
	Title = {Axioms of adaptivity},
	Url = {http://dx.doi.org/10.1016/j.camwa.2013.12.003},
	Volume = {67},
	Year = {2014},
	%shorthand = {CFPP14},
	Bdsk-Url-1 = {http://dx.doi.org/10.1016/j.camwa.2013.12.003}}

@article{s2007,
	Author = {Stevenson, Rob},
	Doi = {10.1007/s10208-005-0183-0},
	Fjournal = {Foundations of Computational Mathematics. The Journal of the Society for the Foundations of Computational Mathematics},
	Issn = {1615-3375},
	Journal = {Found. Comput. Math.},
	Mrclass = {65N30},
	Mrnumber = {2324418},
	Mrreviewer = {Erwin Stein},
	Number = {2},
	Pages = {245--269},
	Title = {Optimality of a standard adaptive finite element method},
	Url = {https://doi.org/10.1007/s10208-005-0183-0},
	Volume = {7},
	Year = {2007},
	Bdsk-Url-1 = {https://doi.org/10.1007/s10208-005-0183-0}}

@article {MR2009692,
    AUTHOR = {Becker, Roland and Rannacher, Rolf},
     TITLE = {An optimal control approach to a posteriori error estimation
              in finite element methods},
   JOURNAL = {Acta Numer.},
  FJOURNAL = {Acta Numerica},
    VOLUME = {10},
      YEAR = {2001},
     PAGES = {1--102},
      ISSN = {0962-4929},
   MRCLASS = {65N15 (49M25 65N30)},
  MRNUMBER = {2009692},
MRREVIEWER = {Wen Bin Liu},
       DOI = {10.1017/S0962492901000010},
       URL = {https://doi.org/10.1017/S0962492901000010},
}

@incollection {MR1352472,
	AUTHOR = {Eriksson, Kenneth and Estep, Don and Hansbo, Peter and
	Johnson, Claes},
	TITLE = {Introduction to adaptive methods for differential equations},
	BOOKTITLE = {Acta numerica, 1995},
	SERIES = {Acta Numer.},
	PAGES = {105--158},
	PUBLISHER = {Cambridge Univ. Press, Cambridge},
	YEAR = {1995},
	volume={4},
	ISBN = {0-521-48255-0},
	MRCLASS = {65L70},
	MRNUMBER = {1352472},
	MRREVIEWER = {Jian-Ping\ Zhu},
	DOI = {10.1017/S0962492900002531},
%	URL = {https://doi.org/10.1017/S0962492900002531},
}

@article {fpz2016,
    AUTHOR = {Feischl, Michael and Praetorius, Dirk and van der Zee,
              Kristoffer G.},
     TITLE = {An abstract analysis of optimal goal-oriented adaptivity},
   JOURNAL = {SIAM J. Numer. Anal.},
  FJOURNAL = {SIAM Journal on Numerical Analysis},
    VOLUME = {54},
      YEAR = {2016},
    NUMBER = {3},
     PAGES = {1423--1448},
      ISSN = {0036-1429},
   MRCLASS = {65N30 (65N50 65Y20)},
  MRNUMBER = {3499557},
MRREVIEWER = {Arnd Meyer},
       DOI = {10.1137/15M1021982},
       URL = {https://doi.org/10.1137/15M1021982},
}

@article {MR4250624,
    AUTHOR = {Dolej{\v{s}}{\'{i}}, V{\'{i}}t and Barto\v{s}, Ond\v{r}ej and Roskovec, Filip},
     TITLE = {Goal-oriented mesh adaptation method for nonlinear problems
              including algebraic errors},
   JOURNAL = {Comput. Math. Appl.},
  FJOURNAL = {Computers \& Mathematics with Applications. An International
              Journal},
    VOLUME = {93},
      YEAR = {2021},
     PAGES = {178--198},
      ISSN = {0898-1221},
   MRCLASS = {65N30 (65N15)},
  MRNUMBER = {4250624},
       DOI = {10.1016/j.camwa.2021.04.004},
       URL = {https://doi.org/10.1016/j.camwa.2021.04.004},
}

@article {ghps2021,
    AUTHOR = {Gantner, Gregor and Haberl, Alexander and Praetorius, Dirk and
              Schimanko, Stefan},
     TITLE = {Rate optimality of adaptive finite element methods with
              respect to overall computational costs},
   JOURNAL = {Math. Comp.},
  FJOURNAL = {Mathematics of Computation},
    VOLUME = {90},
      YEAR = {2021},
    NUMBER = {331},
     PAGES = {2011--2040},
      ISSN = {0025-5718},
   MRCLASS = {65N30 (65N22 65N50 65Y20)},
  MRNUMBER = {4280291},
       DOI = {10.1090/mcom/3654},
       URL = {https://doi.org/10.1090/mcom/3654},
}

@article {bbps2025,
	AUTHOR = {Bringmann, Philipp and Brunner, Maximilian and Praetorius,
	Dirk and Streitberger, Julian},
	TITLE = {Optimal complexity of goal-oriented adaptive {FEM} for
	nonsymmetric linear elliptic {PDE}s},
	JOURNAL = {J. Numer. Math.},
	FJOURNAL = {Journal of Numerical Mathematics},
	VOLUME = {33},
	YEAR = {2025},
	NUMBER = {2},
	PAGES = {133--160},
	ISSN = {1570-2820,1569-3953},
	MRCLASS = {65N30 (65N15 65N50)},
	MRNUMBER = {4913545},
	DOI = {10.1515/jnma-2023-0150},
}

@article {bfmps2025,
	AUTHOR = {Bringmann, Philipp and Feischl, Michael and Mira\c ci, Ani and
	Praetorius, Dirk and Streitberger, Julian},
	TITLE = {On full linear convergence and optimal complexity of adaptive
	{FEM} with inexact solver},
	JOURNAL = {Comput. Math. Appl.},
	FJOURNAL = {Computers \& Mathematics with Applications. An International
	Journal},
	VOLUME = {180},
	YEAR = {2025},
	PAGES = {102--129},
	ISSN = {0898-1221,1873-7668},
	MRCLASS = {65M60 (35J25 35J60 65M12)},
	MRNUMBER = {4843828},
	DOI = {10.1016/j.camwa.2024.12.013},
}

@article {bbimp2022,
	AUTHOR = {Becker, Roland and Brunner, Maximilian and Innerberger,
	Michael and Melenk, Jens Markus and Praetorius, Dirk},
	TITLE = {Rate-optimal goal-oriented adaptive {FEM} for semilinear
	elliptic {PDE}s},
	JOURNAL = {Comput. Math. Appl.},
	FJOURNAL = {Computers \& Mathematics with Applications. An International
	Journal},
	VOLUME = {118},
	YEAR = {2022},
	PAGES = {18--35},
	ISSN = {0898-1221},
	MRCLASS = {65N30 (65N12 65N15 65N50)},
	MRNUMBER = {4432100},
	DOI = {10.1016/j.camwa.2022.05.008},
	URL = {https://doi.org/10.1016/j.camwa.2022.05.008},
%	     %shorthand = {BBIMP22},
}

@article {ip2022,
	AUTHOR = {Innerberger, Michael and Praetorius, Dirk},
	TITLE = {Moo{AFEM}: an object oriented {M}atlab code for higher-order
	adaptive {FEM} for (nonlinear) elliptic {PDE}s},
	JOURNAL = {Appl. Math. Comput.},
	FJOURNAL = {Applied Mathematics and Computation},
	VOLUME = {442},
	YEAR = {2023},
%	PAGES = {\#127731},
	ISSN = {0096-3003},
	MRCLASS = {35-04 (65N30 65N50 68W30)},
	MRNUMBER = {4523405},
	DOI = {10.1016/j.amc.2022.127731},
	URL = {https://doi.org/10.1016/j.amc.2022.127731},
}

@article {hpsv2021,
	AUTHOR = {Haberl, Alexander and Praetorius, Dirk and Schimanko, Stefan
	and Vohral{\'i}k, Martin},
	TITLE = {Convergence and quasi-optimal cost of adaptive algorithms for
	nonlinear operators including iterative linearization and
	algebraic solver},
	JOURNAL = {Numer. Math.},
	FJOURNAL = {Numerische Mathematik},
	VOLUME = {147},
	YEAR = {2021},
	NUMBER = {3},
	PAGES = {679--725},
	ISSN = {0029-599X},
	MRCLASS = {65N30 (35J15 65N12 65N15 65N50 68Q25)},
	MRNUMBER = {4224933},
	MRREVIEWER = {Mohammad Asadzadeh},
	DOI = {10.1007/s00211-021-01176-w},
%	URL = {https://doi.org/10.1007/s00211-021-01176-w},
}

@article{dc2023,
	title={Goal-oriented error analysis of iterative Galerkin discretizations for nonlinear problems including linearization and algebraic errors},
	author={Dolej{\v{s}}{\'\i}, V{\'\i}t and Congreve, Scott},
	journal={J. Comput. Appl. Math},
	fjournal={Journal of Computational and Applied Mathematics},
	pages={115--134},
	year={2023},
	publisher={Elsevier}
}

@article {bhimps2023,
	AUTHOR = {Brunner, Maximilian and Innerberger, Michael and Mira\c ci,
	Ani and Praetorius, Dirk and Streitberger, Julian and Heid,
	Pascal},
	TITLE = {Adaptive {FEM} with quasi-optimal overall cost for
	nonsymmetric linear elliptic {PDE}s},
	JOURNAL = {IMA J. Numer. Anal.},
	FJOURNAL = {IMA Journal of Numerical Analysis},
	VOLUME = {44},
	YEAR = {2024},
	NUMBER = {3},
	PAGES = {1560--1596},
	ISSN = {0272-4979,1464-3642},
	MRCLASS = {65N30 (35J15)},
	MRNUMBER = {4755062},
	MRREVIEWER = {Huipo\ Liu},
	DOI = {10.1093/imanum/drad039},
}

@article{dgs2025,
	title={Adaptive Mesh Refinement for Arbitrary Initial Triangulations}, 
	author={Lars Diening and Lukas Gehring and Johannes Storn},
	year={2025},
	Journal= {Found. Comput. Math., \textup{published online first}},
	DOI={10.1007/s10208-025-09698-7},
}

@article {feischl2022,
	AUTHOR = {Feischl, Michael},
	TITLE = {Inf-sup stability implies quasi-orthogonality},
	JOURNAL = {Math. Comp.},
	FJOURNAL = {Mathematics of Computation},
	VOLUME = {91},
	YEAR = {2022},
	NUMBER = {337},
	PAGES = {2059--2094},
	ISSN = {0025-5718,1088-6842},
	MRCLASS = {65N30 (15A23 65N50)},
	MRNUMBER = {4451456},
	MRREVIEWER = {Dmitrii\ Legatiuk},
	DOI = {10.1090/mcom/3748},
}

\end{document}